\tikzset{snake it/.style={decorate, decoration=snake}}
\tikzset{zigzag it it/.style={decorate, decoration=zigzag}}
\newlength\oversetwidth
\newlength\underwidth
\titleformat{\section}{\large\bfseries\filcenter}{\thesection}{1em}{}
\titleformat{\subsection}{\bfseries}{\thesubsection}{1em}{}
\titleformat{\subsubsection}[runin]{\bfseries}{\thesubsubsection}{1em}{}[.]
\tikzset{
    >=stealth',
    pil/.style={
           ->,
           thick,
           shorten <=2pt,
           shorten >=2pt,}
}
\tikzset{->-/.style={decoration={
  markings,
  mark=at position .8 with {\arrow{>}}},postaction={decorate}}}
\newcommand{\C}{\mathbb{C}}
\newcommand{\R}{\mathbb{R}}
\renewcommand{\P}{\mathbb{P}}
\newcommand{\Id}{\mathrm{Id}}
\newcommand{\RP}{\mathbb{R}\mathrm{P}}
\renewcommand{\vec}{}
\newtheorem{lemma}{Lemma}[section]
\newtheorem{proposition}[lemma]{Proposition}
\newtheorem{theorem}[lemma]{Theorem}
\newtheorem{corollary}[lemma]{Corollary}
\theoremstyle{definition}
\newtheorem{remark}[lemma]{Remark}
\renewenvironment{proof}[1][\proofname] {\par\pushQED{\qed}\normalfont\topsep6\p@\@plus6\p@\relax\trivlist\item[\hskip\labelsep\bfseries#1\@addpunct{.}]\ignorespaces}{\popQED\endtrivlist\@endpefalse}
\title{Intersecting the sides of a polygon}    
\author{Anton Izosimov\thanks{Department of Mathematics, University of Arizona, e-mail: \tt{izosimov@math.arizona.edu}}}
\date{}
\begin{document}
\maketitle
\abstract{
Consider the map $S$ which sends a planar polygon $P$ to a new polygon $S(P)$ whose vertices are the intersection points of second nearest sides of $P$. This map is the inverse of the famous pentagram map. In this paper we investigate the dynamics of the map $S$. Namely, we address the question of whether a convex polygon stays convex under iterations of $S$. Computer experiments suggest that this almost never happens. We prove that indeed the set of polygons which remain convex under iterations of $S$ has measure zero, and moreover it is an algebraic subvariety of codimension two. We also discuss the equations cutting out this subvariety, as well as their geometric meaning in the case of pentagons.
%
}


\section{Introduction}

Let $P$ be a planar polygon, and let $S(P)$ be the polygon whose vertices are the intersection points of second nearest sides of $P$, see Figure \ref{Fig:Smap}. The map $S$ is the inverse of the celebrated \textit{pentagram map}, defined by R.\,Schwartz \cite{schwartz1992pentagram} and studied by many others. Indeed, as can be seen in Figure \ref{Fig:Smap}, one can recover $P$ from $S(P)$ by intersecting consecutive {shortest} diagonals  (i.e.  diagonals connecting second-nearest vertices), which is precisely the definition of the pentagram map. In what follows, we denote the pentagram map by $D$, so that $S = D^{-1}$ and $D = S^{-1}$. 
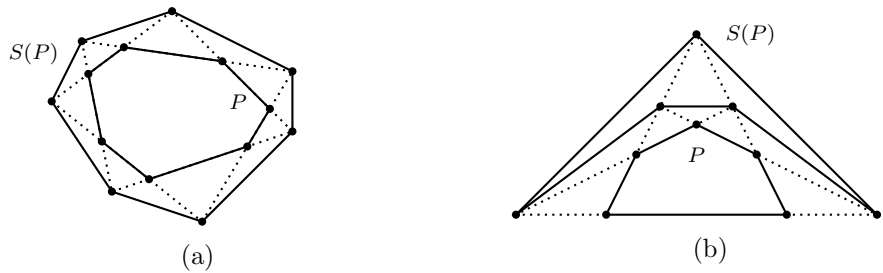
\begin{figure}[b]
\centering
  \begin{subfigure}{0.3\textwidth}
  \begin{tikzpicture}[ thick, scale = 0.8]
\coordinate (VK7) at (0,0);
\coordinate (VK6) at (1.5,-0.5);
\coordinate (VK5) at (3,1);
\coordinate (VK4) at (3,2);
\coordinate (VK3) at (1,3);
\coordinate (VK2) at (-0.5,2.5);
\coordinate (VK1) at (-1,1.5);
\draw[fill=black] (VK1) circle (.3ex);
\draw[fill=black] (VK2) circle (.3ex);
\draw[fill=black] (VK3) circle (.3ex);
\draw[fill=black] (VK4) circle (.3ex);
\draw[fill=black] (VK5) circle (.3ex);
\draw[fill=black] (VK6) circle (.3ex);
\draw[fill=black] (VK7) circle (.3ex);

\draw (VK7) -- (VK6) -- (VK5) -- (VK4) -- (VK3) -- (VK2) -- (VK1) -- cycle;
\draw [dotted, name path=AC] (VK7) -- (VK5);
\draw [dotted, name path=BD] (VK6) -- (VK4);
\draw [dotted ,name path=CE] (VK5) -- (VK3);
\draw [dotted, name path=DF] (VK4) -- (VK2);
\draw [dotted, name path=EG] (VK3) -- (VK1);
\draw [dotted,name path=FA] (VK2) -- (VK7);
\draw [dotted,name path=GB] (VK1) -- (VK6);

\path [name intersections={of=AC and BD,by=Bp}];
\path [name intersections={of=BD and CE,by=Cp}];
\path [name intersections={of=CE and DF,by=Dp}];
\path [name intersections={of=DF and EG,by=Ep}];
\path [name intersections={of=EG and FA,by=Fp}];
\path [name intersections={of=FA and GB,by=Gp}];
\path [name intersections={of=GB and AC,by=Ap}];

\draw  (Ap) -- (Bp) -- (Cp) -- (Dp) -- (Ep) -- (Fp) -- (Gp) -- cycle;

\draw[fill=black] (Ap) circle (.3ex);
\draw[fill=black] (Bp) circle (.3ex);
\draw[fill=black] (Cp) circle (.3ex);
\draw[fill=black] (Dp) circle (.3ex);
\draw[fill=black] (Ep) circle (.3ex);
\draw[fill=black] (Fp) circle (.3ex);
\draw[fill=black] (Gp) circle (.3ex);

\node at (-1.3,2.3) () {$\scriptstyle{S(P)}$};
\node at (2.1,1.5) () {$ \scriptstyle P$};

\end{tikzpicture}
    \caption{} \label{fig:1a}
  \end{subfigure}%
  \qquad\qquad
  \begin{subfigure}{0.3\textwidth}
  \begin{tikzpicture}[ thick, scale = 0.8]
\coordinate (A) at (0,0);
\coordinate (B) at (3,0);
\coordinate (C) at (2.5,1);
\coordinate (D) at (1.5,1.5);
\coordinate (E) at (0.5,1);

\coordinate (Ap) at (-1.5,0);
\coordinate (Bp) at (0.9,1.8);
\coordinate (Cp) at (2.1,1.8);
\coordinate (Dp) at (4.5,0);
\coordinate (Ep) at (1.5,3);

\draw [dotted] (A) -- (Ap) -- (E) -- (Ep) -- (C) -- (Dp) -- (B);
\draw [dotted] (Bp) --(D)  -- (Cp);

\draw  (A) -- (B) -- (C) -- (D) -- (E) -- cycle;
]
%
\draw   (Ap) -- (Bp) -- (Cp) -- (Dp) -- (Ep) -- cycle;
\draw[fill=black] (Ap) circle (.3ex);
\draw[fill=black] (Bp) circle (.3ex);
\draw[fill=black] (Cp) circle (.3ex);
\draw[fill=black] (Dp) circle (.3ex);
\draw[fill=black] (Ep) circle (.3ex);
\draw[fill=black] (A) circle (.3ex);
\draw[fill=black] (B) circle (.3ex);
\draw[fill=black] (C) circle (.3ex);
\draw[fill=black] (D) circle (.3ex);
\draw[fill=black] (E) circle (.3ex);
\node at (2.4,3) () {$\scriptstyle S(P)$};
\node at (1.5,1) () {$\scriptstyle P$};

\end{tikzpicture}
    \caption{} \label{fig:1a}
\end{subfigure}

\caption{The inverse pentagram map.}\label{Fig:Smap}
\end{figure}

\begin{figure}[t]
\centering
\includegraphics[width = 3cm]{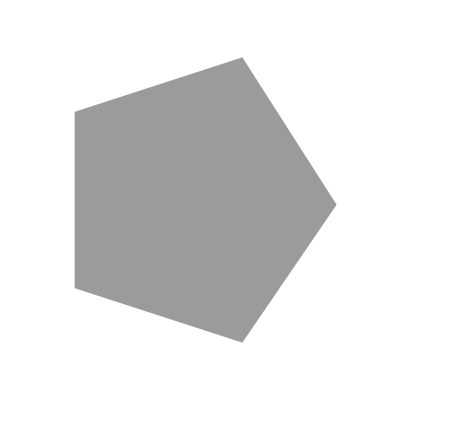}\includegraphics[width = 3cm]{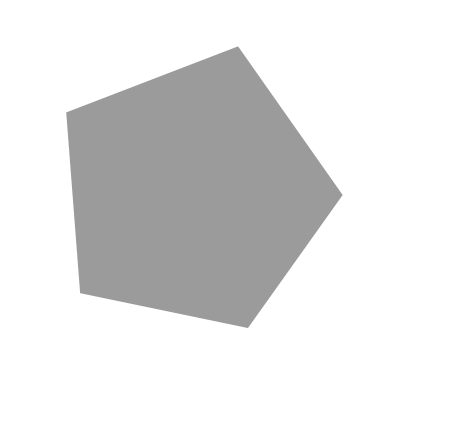}\includegraphics[width = 3cm]{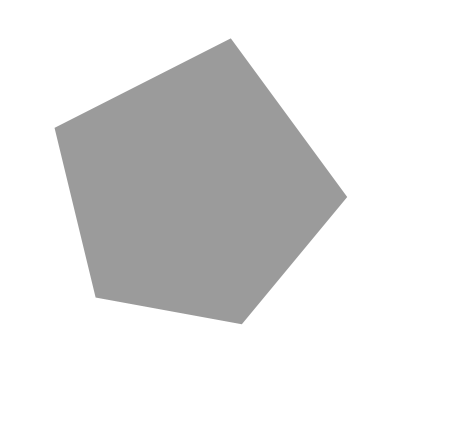}\includegraphics[width = 3cm]{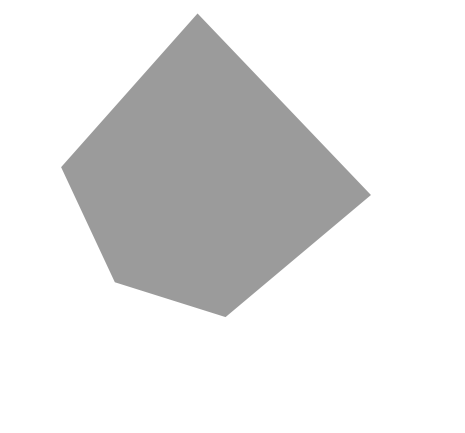}\includegraphics[width = 3cm]{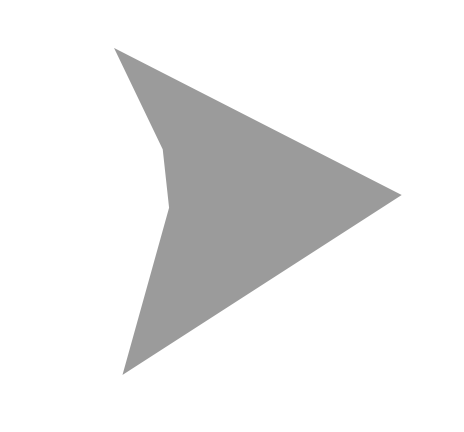}
\caption{The orbit of a polygon under the inverse pentagram map. Each iteration is normalized by means of an appropriate affine transformation.}\label{Fig:SmapOrbit}
\end{figure}

\par


As can be seen from Figure \ref{Fig:Smap}, the image of a convex polygon under the inverse pentagram map $S$ does not have to be convex. Moreover, computer experiments show that a randomly chosen polygon becomes non-convex after several applications of $S$, see Figure \ref{Fig:SmapOrbit}. The goal of the present paper is to determine necessary and sufficient conditions on a convex polygon $P$ which guarantee that {all} successive images of $P$ under  $S$, i.e. $S(P), S(S(P)), \dots$, are convex polygons. Our main result is that the set of polygons with this property has zero measure and moreover is a codimension two algebraic surface. Furthermore, we present explicit equations describing this surface, i.e. explicit conditions on $P$ which ensure that all its successive images under the map $S$ are convex. \par

We will state our main result in two different forms: in terms of a certain vector $d_P$ associated with the polygon $P$, and in terms of a certain operator $G_P$ also associated with $P$. \par To define $d_P$, consider a convex planar $n$-gon $P$ in the affine plane. Assume that the vertices of $P$ are labeled in cyclic order by residues modulo $n$. Denote by $ {\vec d}_i$ the vector connecting the vertices $i - 1$ and $i+1$, and let $\mathcal A_i$ be the area of the triangle cut out by the line joining those vertices, see Figure~\ref{Fig:ddef}. 
\begin{figure}[b]
\centering
\begin{tikzpicture}[thick, label distance=0.5mm, scale = 0.7]
\coordinate (VK7) at (0,0.5);
\coordinate (VK6) at (1.5,-0);
\coordinate (VK5) at (3,1);
\coordinate [label=right:$\scriptstyle{i + 1}$] (VK4) at (3,2);
\coordinate  [label=$ \scriptstyle i$] (VK3) at (1.5,3.5);
\coordinate[label=left:$\scriptstyle{i - 1}$]  (VK2) at (-0.5,2.5);
\coordinate (VK1) at (-1,1.5);
\draw[fill=black] (VK2) circle (.3ex);
\draw[fill=black] (VK3) circle (.3ex);
\draw[fill=black] (VK4) circle (.3ex);

\draw  (VK5) -- (VK4) -- (VK3) -- (VK2) -- (VK1) ;
\draw [dashed] (VK5) -- (VK6);
\draw [dashed] (VK1) -- (VK7);
\draw  [ ->-] (VK2) -- (VK4);
\fill [opacity = 0.2] (VK2) -- (VK3) -- (VK4) -- cycle;
\node at (1.3, 1.9) () {$\scriptstyle \vec{d}_i$};
\node at (1.4, 2.8) () {$\scriptstyle \mathcal A_i$};
\node at (7,1.5) () {$\displaystyle \vec d_P : = \sum_{i=1}^n \frac{\vec{d}_i}{\mathcal A_i}$};
%
%
%
%
%

\end{tikzpicture}
\caption{To the construction of the vector $\vec d_P$.}\label{Fig:ddef}
\end{figure}
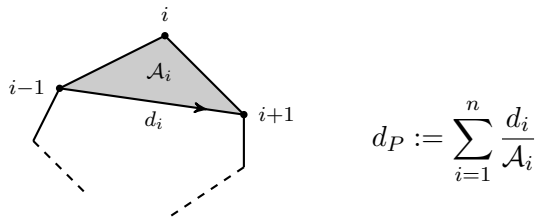
Then $d_P$ is defined by
\begin{align}\label{eq:d}
\vec d_P : = \sum_{i=1}^n \frac{\vec{d}_i}{\mathcal A_i}.
\end{align}
The first way to formulate our main result is as follows: for a convex polygon $P$, {all} successive images of $P$ under the inverse pentagram map  $S$ are convex if and only if $\vec d_P = 0$. Since this is equivalent to two algebraic equations on the coordinates of  vertices, it follows that the set of convex $n$-gons $P$ such that $S^k(P)$ is convex for every $k \geq 0$ is a codimension two algebraic surface in the $2n$-dimensional space of all convex $n$-gons. \par
Another way to state our result is in terms of a certain operator introduced by M.\,Glick \cite{glick2020limit}. This operator is defined as follows. Assume that we are given a polygon $P$ in the projective plane $\P^2$. Let $v_i \in \R^3$ be the vector of homogeneous coordinates of the $i$'th vertex of $P$. Then {Glick's operator} $ G_P \colon \R^3 \to \R^3$ is defined by 
\begin{align}\label{eq:glick}
G_P(v) := n v - \sum_{i=1}^n \frac{v_{i-1}\wedge v \wedge v_{i+1}}{v_{i-1}\wedge v_i \wedge v_{i+1}}\,v_i.
\end{align}
Observe that the right-hand side does not change under rescaling of $v_i$'s, so this is indeed a well-defined operator. Furthermore, as any operator in the $3$-space, Glick's operator can be interpreted as a projective mapping $\P^2 \to \P^2$.

\begin{theorem}\label{thm1}
Consider a convex planar polygon $P$ in the affine plane with at least five vertices. Then the following conditions are equivalent:
\begin{enumerate}
\item All successive images of the polygon $P$ under the inverse pentagram map $S$ are convex.
\item The associated vector $\vec d_P $ defined by \eqref{eq:d} is zero. 
\item The associated Glick's operator $G_P$ defined by \eqref{eq:glick}, regarded as a projective mapping, is affine. 
\end{enumerate}
\end{theorem}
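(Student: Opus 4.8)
The plan is to dispatch the equivalence (2)$\Leftrightarrow$(3) by a direct computation and to reserve the genuinely dynamical argument for tying these to (1). For (2)$\Leftrightarrow$(3), place $P$ in the affine chart $z=1$, so that $v_i=(x_i,y_i,1)$ and the line at infinity $\ell_\infty$ is the plane $\{z=0\}$ with conormal $e_3^*=(0,0,1)$. A projective map is affine precisely when it preserves $\ell_\infty$, i.e. when $e_3^*$ is a left eigenvector of $G_P$; equivalently, the bottom row of the matrix of $G_P$ must read $(0,0,\ast)$. Expanding the triple product $v_{i-1}\wedge v\wedge v_{i+1}$ along the column $v$ and reading off the third coordinate of $G_P(v)$, one finds (using $v_{i-1}\wedge v_i\wedge v_{i+1}=2\mathcal A_i$ and $\vec d_i=v_{i+1}-v_{i-1}$) that the coefficients of $x$ and $y$ in $(G_P(v))_3$ are, up to a swap and sign, the two components of $\sum_i \vec d_i/\mathcal A_i$. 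Hence those coefficients vanish, i.e. $G_P$ is affine, if and only if $\vec d_P=0$.

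The substance of the theorem is the equivalence of (1) with (3), whose geometric content is that $\ell_\infty$ is an invariant line of $G_P$. I would exploit the fact that $G_P$ governs the asymptotics of the pentagram dynamics: the forward iterates $D^k(P)$ of a convex polygon collapse projectively onto a distinguished eigendirection of $G_P$, so the backward iterates $S^k(P)=D^{-k}(P)$ expand away from that direction within the complementary $G_P$-invariant plane. Since $D$ carries each convex polygon strictly into its own interior, the relation $D(S(P))\subset\mathrm{int}\,S(P)$ gives a nested family $P\subset S(P)\subset S^2(P)\subset\cdots$ as long as convexity persists, and the obstruction to convexity is exactly whether this expanding family ever meets $\ell_\infty$.

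For (3)$\Rightarrow$(1) I would first show that $\{\vec d_P=0\}$ is $S$-invariant, so that it is a codimension-two $S$-invariant subvariety; this should follow from an equivariance property of Glick's operator under the pentagram map, the conjugacy relating $G_P$ to $G_{S(P)}$ fixing $\ell_\infty$ whenever $\ell_\infty$ is already $G_P$-invariant. Granting invariance, one checks that a single application of $S$ preserves convexity when $\ell_\infty$ is disjoint from the polygon and $G_P$-invariant, and (1) follows by induction. For the converse (1)$\Rightarrow$(3) I would argue by contraposition: if $\ell_\infty$ is not $G_P$-invariant, the successive images of $\ell_\infty$ under the dynamics drift relative to the polygon, and the expansion driven by the eigenvalue gap of $G_P$ forces $\ell_\infty$ eventually to cross a side of some $S^k(P)$, destroying its convexity. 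The main obstacle is precisely this quantitative step: converting the qualitative drift of the line at infinity into a proof that non-convexity is unavoidable when $\vec d_P\neq 0$ demands real control of the expanding map — the spectral positivity and the eigenvalue gap of $G_P$ on convex polygons, together with a uniform statement that the separation between $\ell_\infty$ and the polygon cannot persist. Securing the $S$-invariance of $\{\vec d_P=0\}$ cleanly, and establishing the spectral estimates underpinning the drift argument, is where I expect the real work to lie.
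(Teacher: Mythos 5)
Your treatment of $2\Leftrightarrow 3$ is essentially the computation in the paper: read off the third row of $G_P$, identify the coefficients of $x$ and $y$ with the components of $\sum_i \vec d_i/\mathcal A_i$, and conclude. One small point you gloss over: a bottom row of the form $(0,0,\ast)$ only gives an affine map if $\ast\neq 0$; the paper disposes of this by noting that the coefficient of $z$ equals $n+\sum_i \mathcal A_i^{-1}\det\bigl(\begin{smallmatrix}x_{i-1}&x_{i+1}\\ y_{i-1}&y_{i+1}\end{smallmatrix}\bigr)$, which is coordinate-invariant and strictly positive once the origin is placed inside $P$. That half of your argument is sound.

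The equivalence $1\Leftrightarrow 3$ is where the genuine gap lies: both of your proposed implications rest on steps you have not supplied and that I do not believe can be supplied along the lines you sketch. For $3\Rightarrow 1$ you assert that ``a single application of $S$ preserves convexity when $\ell_\infty$ is $G_P$-invariant'' and that $\{\vec d_P=0\}$ is $S$-invariant via ``a conjugacy relating $G_P$ to $G_{S(P)}$''; neither claim is justified, and no such clean conjugacy of Glick operators along the pentagram orbit is known in general (it holds for the pentagon operator $R_P$, not for $G_P$). For $1\Rightarrow 3$ you admit the drift/eigenvalue-gap argument is only heuristic. The paper's route avoids all of this by passing to the dual polygon: the identity $S(P)^*=D(P^*)$ converts the question about \emph{backward} iterates of $P$ into one about \emph{forward} iterates of $P^*$, and a duality lemma shows $S^k(P)$ is convex iff $D^k(P^*)$ contains the point $L$ dual to $\ell_\infty$. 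Hence all $S^k(P)$ are convex iff $L$ is the limit point of $D^k(P^*)$, i.e.\ iff $L$ is a fixed point of $G_{P^*}=G_P^*$ lying inside $P^*$ --- and the decisive ingredient is that $G_{P^*}$ has a \emph{unique} fixed point in $\mathrm{conv}(P^*)$ (proved by restricting $G_{P^*}^2$ to the line through two putative fixed points and deriving a contradiction with the attracting/repelling structure of a M\"obius map). Your sketch contains no substitute for this uniqueness statement, which is exactly what turns ``$\ell_\infty$ is invariant'' into ``$\ell_\infty$ is the limit point'' and back; without it, neither direction of $1\Leftrightarrow 3$ closes.
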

We prove this theorem by establishing equivalences $2 \Leftrightarrow 3$ and $1 \Leftrightarrow 3$. The proof of $2 \Leftrightarrow 3$ is a direct computation which we outline in Section \ref{sec:gen1}. As for $1 \Leftrightarrow 3$, that part is based on more subtle properties of Glick's operator and its connection with pentagram dynamics. We discuss these properties in Section~\ref{sec:glick}, after which we complete the proof in Section \ref{sec:gen2}.

\begin{remark}\label{rm:c2}
Since the condition $\vec d_P = 0$ is equivalent to two algebraic equations in terms of vertices of $P$, it follows from Theorem \ref{thm1} that the set of polygons which remain convex under iterations of $S$ is an algebraic subvariety of codimension two. The latter can also be proved without using the vector  $\vec d_P$, as outlined in Remark \ref{rem:ref}.
\end{remark}

In the case of pentagons, there is one more condition equivalent to the above three:
\begin{theorem}\label{thm2}
For pentagons, the three conditions of Theorem \ref{thm1} are equivalent to the following one: \begin{enumerate}\setcounter{enumi}{3} \item The conic inscribed in $P$ and the conic circumscribed about $P$ are concentric. \end{enumerate}
\end{theorem}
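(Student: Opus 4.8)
The plan is to prove that condition~4 is equivalent to condition~3 of Theorem~\ref{thm1}; its equivalence with conditions~1 and~2 then follows from that theorem. The restriction to pentagons is exactly what makes condition~4 meaningful: a conic in $\P^2$ has five degrees of freedom, so a generic pentagon has a unique nondegenerate circumscribed conic $Q_{\mathrm{out}}$ through its five vertices $v_1,\dots,v_5$ and a unique nondegenerate inscribed conic $Q_{\mathrm{in}}$ tangent to its five sides, whereas for $n\neq 5$ neither conic is canonically defined. I represent both conics by symmetric $3\times 3$ matrices, so that $v_i^{\top}Q_{\mathrm{out}}v_i=0$ for every $i$ and, dually, the side covectors $\ell_i=v_i\wedge v_{i+1}$ satisfy $\ell_i^{\top}Q_{\mathrm{in}}^{-1}\ell_i=0$.

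The geometric input is the classical description of the center of a conic as the pole of the line at infinity. Writing $\ell_\infty$ for the covector cutting out the line at infinity, the centers of the two conics are the points $Q_{\mathrm{out}}^{-1}\ell_\infty$ and $Q_{\mathrm{in}}^{-1}\ell_\infty$ of $\P^2$, so the conics are concentric if and only if these two points are proportional. Multiplying the relation $Q_{\mathrm{out}}^{-1}\ell_\infty\propto Q_{\mathrm{in}}^{-1}\ell_\infty$ by $Q_{\mathrm{in}}$ and using $Q_{\mathrm{in}}Q_{\mathrm{out}}^{-1}=(Q_{\mathrm{out}}^{-1}Q_{\mathrm{in}})^{\top}$ (both matrices being symmetric), this is equivalent to $M^{\top}\ell_\infty\propto\ell_\infty$, i.e. to $\ell_\infty$ being an invariant line of the projective map $M:=Q_{\mathrm{out}}^{-1}Q_{\mathrm{in}}$. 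On the other hand, condition~3 says exactly that $G_P$, regarded as a projective map, fixes the line at infinity, i.e. that $\ell_\infty$ is an invariant line of $G_P$.

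Everything therefore reduces to a single key lemma: for a pentagon, Glick's operator $G_P$ and the conic-ratio operator $M=Q_{\mathrm{out}}^{-1}Q_{\mathrm{in}}$ have the same invariant lines. I expect this in the sharp form $G_P=\alpha\,\Id+\beta\,M$ for scalars $\alpha,\beta$ with $\beta\neq 0$; since adding a multiple of the identity (and rescaling) leaves the set of left eigenvectors of a matrix unchanged, this form immediately yields the coincidence of invariant lines. Granting the lemma, $\ell_\infty$ is invariant under $G_P$ if and only if it is invariant under $M$, which by the previous paragraph happens precisely when $Q_{\mathrm{out}}$ and $Q_{\mathrm{in}}$ are concentric; together with Theorem~\ref{thm1} this gives the equivalence of condition~4 with conditions~1--3.

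The key lemma is the main obstacle. The difficulty is that $G_P$ is defined purely combinatorially, as a sum over the five diagonals $v_{i-1}\wedge v_{i+1}$ appearing in \eqref{eq:glick}, whereas $M$ is assembled from the two conic matrices; relating them means translating the diagonal and tangency data of the pentagon into the pole--polar structure of $Q_{\mathrm{out}}$ and $Q_{\mathrm{in}}$. I would establish it by a direct computation in adapted coordinates: normalize $Q_{\mathrm{out}}$ to a standard form such as $x^2+y^2=z^2$ by a projective transformation, parametrize the five vertices on it, read off $Q_{\mathrm{in}}$ from the five tangency conditions and $G_P$ from its definition, and verify $G_P=\alpha\,\Id+\beta\,M$ entrywise. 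The count $n=5$ should make the underlying linear system exactly determined, which is precisely why the identity is available for pentagons but not for polygons with a different number of vertices.
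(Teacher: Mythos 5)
Your reduction of concentricity to the statement that $\ell_\infty$ is an invariant line of $M=Q_{\mathrm{out}}^{-1}Q_{\mathrm{in}}$ is correct and matches how the paper finishes its own argument (the center of a conic is the pole of the line at infinity). The gap is that the entire content of the proof is then concentrated in the ``key lemma'' $G_P=\alpha\,\Id+\beta\,M$, which you do not prove --- you only announce that you would verify it entrywise in adapted coordinates --- and for which you give no geometric reason. The lemma asserts that the three matrices $\Id$, $G_P$, $M$ are linearly dependent. This is automatic for the regular pentagon, where dihedral symmetry forces both $G_P$ and $M$ to have the form $\mathrm{diag}(a,a,c)$ in centered coordinates, so that test case carries no information; for a general pentagon it is a strong identity ($Q_{\mathrm{out}}G_P$ would have to lie in the two-dimensional span of $Q_{\mathrm{out}}$ and $Q_{\mathrm{in}}$ inside the nine-dimensional space of $3\times 3$ matrices), and nothing in the combinatorial definition \eqref{eq:glick} of $G_P$ suggests it. Note also that the sharp form is sensitive to the choice between $Q_{\mathrm{out}}^{-1}Q_{\mathrm{in}}$ and its inverse $Q_{\mathrm{in}}^{-1}Q_{\mathrm{out}}$ (their spans with $\Id$ differ), which is another sign that the statement is being guessed rather than derived. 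What \emph{can} be established is weaker: $G_P$ and $R_P:=Q_I^{-1}Q_C$ commute, and even that requires two classical inputs (Clebsch's theorem, giving a projective map $D_P=G_P-3\,\Id$ with $D_P(P)=D(P)$, and Kasner's theorem $DI=ID$, which makes $D_P$ commute with $I_P$ and hence with $R_P=I_P^2$). Commutation alone does not yield ``same invariant lines'' when eigenvalues collide, so even the fallback version of your lemma needs an extra argument.

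The paper closes exactly this loophole dynamically rather than algebraically. Its main proof does not pass through condition 3 at all: Kasner's theorem gives $R_{D(P)}=R_P$, from which $R_P$ is shown to satisfy the same three properties as $G_P$ in Proposition~\ref{prop:glick} (compatibility with duality, $R_P(\mathrm{conv}(P))\subset\mathrm{int}(\mathrm{conv}(P))$, and the limit point of $D^k(P)$ being the \emph{unique} fixed point in $\mathrm{conv}(P)$); this yields $1\Leftrightarrow(\text{$R_P$ affine})\Leftrightarrow 4$ directly. The concluding remark of the paper sketches the route you are attempting ($3\Leftrightarrow 4$), but there too the transfer of the invariance of $\ell_\infty$ from $R_P$ to $G_P$ uses the uniqueness of the fixed point of $R_P^*$ in the interior of $P^*$ together with the fact that $G_P^*$ preserves that interior and commutes with $R_P^*$ --- not a linear dependence of the operators. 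In short: your framework is fine, but the one statement doing all the work is unproven, is stronger than what is needed, and bypasses Kasner's theorem, which is precisely the geometric input that links the diagonal data defining $G_P$ to the conic data defining $M$. Either prove the sharp form (a substantial computation whose outcome is not guaranteed) or replace it with the commutation-plus-unique-fixed-point argument, at which point you have reconstructed the paper's proof.
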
 
Apparently, there should be an elementary way of proving this by establishing equivalence $2 \Leftrightarrow 4$:  the inscribed and circumscribed conics are concentric if and only if $\vec d_P = 0$. However, 
 we are not aware of such a proof. What we do instead is directly prove the equivalence $1 \Leftrightarrow 4$.  The main ingredient of the proof is E.\,Kasner's theorem on pentagons. 
\begin{remark}
S.\,Tabachnikov \cite{tabachnikov2019kasner} proved that Kasner's theorem holds for all Poncelet polygons (i.e.  polygons inscribed in conic and circumscribed about a conic). Therefore, Theorem \ref{thm2} should be true for such polygons too. We do not consider this case here so as to not encumber the exposition.
\end{remark}
\begin{remark}
Glick (personal communication) suggested the following geometric interpretation of the condition $d_P = 0$, valid regardless of the number of vertices. Say that two polygons $P$ in $Q$ are \textit{dual with respect to the line at infinity} if sides of $P$ are parallel to shortest diagonals of $Q$, while sides of $Q$ are parallel to shortest diagonals of $P$, see Figure \ref{fig:dual}. Then it turns out that a polygon $P$ admits such a dual if and only if $d_P = 0$.
%
%

\end{remark}

  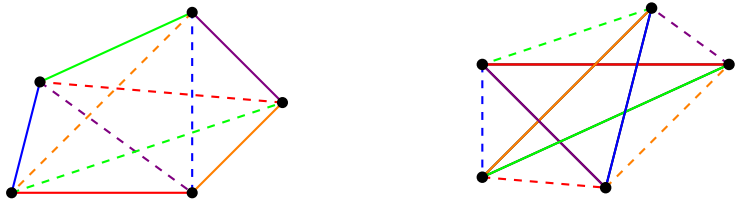
\begin{figure}[t]
  \centering
  \begin{tikzpicture}[ thick]
  \node at (0,0)
  {
    \begin{tikzpicture}[ thick, scale = 1.2]
    \coordinate  [label ={[shift={(0,0)}]225:$\scriptstyle $}](A) at (0,0);
\coordinate [label ={[shift={(0,0)}]315:$\scriptstyle $}] (B) at (2,0);
\coordinate  [label ={[shift={(0,0)}]0:$\scriptstyle $}]  (C) at (3,1);
\coordinate [label ={[shift={(0,0)}]90:$\scriptstyle $}]  (D) at (2,2);
\coordinate [label ={[shift={(0,0)}]135:$\scriptstyle $}]  (E) at ($( {7/4 - sqrt(33)/4)},  {3/4 + sqrt(11/3)/4)})$);
\draw [red] (A) -- (B);
\draw [orange] (B) -- (C);
\draw [violet] (C) -- (D);
\draw [green](D) -- (E);
\draw [blue](E) -- (A);
\draw [red, dashed] (C) -- (E);
\draw [orange, dashed] (D) -- (A);
\draw [violet, dashed] (E) -- (B);
\draw [green, dashed](A) -- (C);
\draw [blue, dashed](B) -- (D);
\draw[fill=black] (A) circle (.3ex);
\draw[fill=black] (B) circle (.3ex);
\draw[fill=black] (C) circle (.3ex);
\draw[fill=black] (D) circle (.3ex);
\draw[fill=black] (E) circle (.3ex);
      \end{tikzpicture}
      };
      
        \node at (6,0)
  {
    \begin{tikzpicture}[ thick, scale = 0.75]
\coordinate (Ap) at (0,0);
\coordinate (Bp) at (-3,-1);
\coordinate (Cp) at (-3,-3);
\coordinate (Dp) at ($( {-12 / (9 + sqrt(33))}, {1/4 * (-7 - sqrt(33))})$);
\coordinate (Ep) at ($( {1/2 * (-3 + sqrt(33))}, -1)$);
\draw [red, dashed] (Cp) -- (Dp) node[midway,below] {\color{black} $\scriptstyle $};
\draw [orange, dashed] (Dp) -- (Ep) node[midway,below] {\color{black} $\scriptstyle $};
\draw [violet, dashed] (Ep) -- (Ap) node[midway,above] {\color{black} $\scriptstyle $};
\draw [green, dashed](Ap) -- (Bp) node[midway,above] {\color{black} $\scriptstyle $};
\draw [blue, dashed](Bp) -- (Cp) node[midway,left] {\color{black} $\scriptstyle $};
\draw []  (Ap) -- (Cp) -- (Ep) -- (Bp) -- (Dp) -- cycle;
\draw [red]  (Ep) -- (Bp);
\draw [orange]  (Ap) -- (Cp);
\draw [violet]  (Bp) -- (Dp);
\draw [green]  (Cp) -- (Ep);
\draw [blue]  (Dp) -- (Ap);
\draw[fill=black] (Ap) circle (.5ex);
\draw[fill=black] (Bp) circle (.5ex);
\draw[fill=black] (Cp) circle (.5ex);
\draw[fill=black] (Dp) circle (.5ex);
\draw[fill=black] (Ep) circle (.5ex);
      \end{tikzpicture}
      };

%
]
%
%
%

\end{tikzpicture}
    \caption{Polygons dual with respect to the line at infinity. Lines of the same color and style are parallel to each other.} \label{fig:dual}
\end{figure}


{\bf Acknowledgments.} The author is grateful to 
Niklas Affolter, Max Glick, Boris Khesin, Richard Schwartz, Sergei Tabachnikov, and the anonymous referee for fruitful discussions and useful remarks. Figure \ref{Fig:SmapOrbit} was created with help of an applet written by Richard Schwartz. Figures~\ref{Fig:rmap}, \ref{Fig:Imap},  and \ref{Fig:Imap2} were created with help of software package Cinderella.  
 This work was supported by NSF grant DMS-2008021.
 \medskip
\section{When is Glick's operator affine?}\label{sec:gen1}
We start the proof of Theorem \ref{thm1} by showing the equivalence $2 \Leftrightarrow 3$: Glick's operator's $G_P$ is affine if and only $\vec d_P = 0$. Given a polygon in the affine plane, denote by $(x_i, y_i)$ the Cartesian coordinates of its vertices. We assume that the vertices are labeled in the counter-clockwise order. As in the introduction, denote by  $\vec d_i$ the vector connecting the vertices $i - 1$ and $i+1$, and let $\mathcal A_i$ be the area of the triangle cut out by the line joining those vertices, as shown in Figure~\ref{Fig:ddef}. Then a straightforward calculation shows that Glick's operator $G_P$ is given by
\begin{align}\label{eq:gpaff}
G_P\left(\begin{array}{c}x\\y \\z\end{array}\right) = n \left(\begin{array}{c}x \\y \\z\end{array}\right) - \sum_{i=1}^n
 \frac{1}{\mathcal A_i}
  \left( 
   \det
   \left(
   \begin{array}{cc}  \begin{array}{c} x \\y \end{array}& \vec d_i \end{array}
   \right)  
   -  z\cdot\det \left(\begin{array}{cc}x_{i-1} & x_{i+1} \\y_{i-1} &  y_{i+1} \end{array} \right) 
   \right)
  \left(\begin{array}{c}x_i \\y_i \\1\end{array}\right)
\end{align}
From this formula it follows that the preimage of the line at infinity under $G_P$ is given in homogeneous coordinates $x$, $y$, $z$ by 
\begin{align}\label{eq:inftypre}
 \det \left(\begin{array}{cc}  \begin{array}{c} x \\y \end{array}& \vec d_P \end{array}\right) = z\left(n+\sum_{i=1}^n  \frac{1}{\mathcal A_i} \det \left(\begin{array}{cc}x_{i-1} & x_{i+1} \\y_{i-1} &  y_{i+1} \end{array}\right)\right).
\end{align}
Now assume that $G_P$ is affine. Then the line at infinity is mapped to itself, so the above equation implies $\vec d_P = 0$. Conversely, assume that  $\vec d_P = 0$. Then the coefficient of $z$ in the above equation is easily seen to be invariant under coordinate transformations, and by choosing coordinates in such a way that the origin is in the interior of the polygon, one shows that the coefficient of $z$ is always positive and in particular does not vanish. At the same time, since $\vec d_P = 0$, the coefficients of $x$ and $y$ vanish, so the preimage of the line at infinity is the line at infinity, as desired.

\medskip

\section{Glick's operator and the pentagram map}\label{sec:glick}
In this section we discuss properties of Glick's operator $G_P$ and its connection with the dynamics of the pentagram map $D$. This is mainly an overview of \cite{glick2020limit}, but for some of the results we present a refined version. 
\begin{proposition}\label{prop:glick}
Let $P$ be a convex polygon in the affine plane with at least five vertices. Then the following is true:
\begin{enumerate}
\item For the dual polygon $P^*$, one has $G_{P^*} = G_P^*$ (recall that the dual polygon is the polygon in the dual projective plane whose vertices are the sides of initial polygon). 
\item Let $\mathrm{conv}(P)$ be the convex hull of vertices of $P$, and let $\mathrm{int}(\mathrm{conv}(P))$ be its interior. Then $G_P(\mathrm{conv}(P)) \subset \mathrm{int}(\mathrm{conv}(P))$. In particular, the projective mapping $G_P$ is well-defined at all points of $\mathrm{conv}(P)$ (i.e. none of those points belong to the kernel of $G_P$ when the latter is regarded as an operator in the $3$-space).  
\item The intersection
$
\bigcap_{k \geq 0} \mathrm{conv}(D^k(P)) = \bigcap_{k \geq 0} \mathrm{int}(\mathrm{conv}(D^k(P)))
$
is a single point, known as \textit{the limit point of} the pentagram orbit $D^k(P)$. This point is a fixed point of $G_P$. Moreover, it is the only fixed point of $G_P$ in $\mathrm{conv}(P)$.
\end{enumerate}
\end{proposition}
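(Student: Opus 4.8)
The plan is to handle the three parts by different means: part (1) is linear algebra, part (2) is a convexity computation centered on the vertices, and part (3) combines part (2) with the nesting of pentagram iterates.

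For part (1) I would first rewrite Glick's operator \eqref{eq:glick} as an explicit rank-three correction of a multiple of the identity. Writing $[a,b,c]:=\det(a,b,c)$ and using $[v_{i-1},v,v_{i+1}]=\langle v,\,v_{i+1}\times v_{i-1}\rangle$, the coefficient of $v_i$ becomes a linear functional $\langle \phi_i,v\rangle$ with $\phi_i=(v_{i+1}\times v_{i-1})/[v_{i-1},v_i,v_{i+1}]$, so that $G_P=n\,\Id-\sum_i v_i\otimes\phi_i$ and hence $G_P^{*}=n\,\Id-\sum_i \phi_i\otimes v_i$. On the dual side the vertices of $P^{*}$ are the side-lines $w_i=v_i\times v_{i+1}$ (up to the cyclic relabeling matching the two orientations), and substituting them into \eqref{eq:glick} gives the analogous expression for $G_{P^{*}}$. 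The equality $G_{P^{*}}=G_P^{*}$ then reduces to the bilinear identity
\[
\sum_{i=1}^n \frac{[v_{i-1},v,v_{i+1}]}{[v_{i-1},v_i,v_{i+1}]}\,\langle \xi, v_i\rangle \;=\; \sum_{i=1}^n \frac{[w_{i-1},\xi,w_{i+1}]}{[w_{i-1},w_i,w_{i+1}]}\,\langle w_i, v\rangle,
\]
valid for all $v,\xi$, which I would verify by expanding the products $w_j=v_j\times v_{j+1}$ with the Grassmann--Plücker relation $(a\times b)\times(c\times d)=[a,b,d]\,c-[a,b,c]\,d$. The only real work is bookkeeping the indices and the area denominators; a helpful structural check is the $\GL$-equivariance $G_{AP}=A\,G_P\,A^{-1}$ (immediate from $[Av_{i-1},Av,Av_{i+1}]=\det(A)\,[v_{i-1},v,v_{i+1}]$), which simultaneously confirms that $G_P$ is well defined projectively and is consistent with duality.

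For part (2) the point is that $G_P$ is especially transparent on the vertices. Set $c_i(v):=[v_{i-1},v,v_{i+1}]/[v_{i-1},v_i,v_{i+1}]$, so $G_P(v)=nv-\sum_i c_i(v)\,v_i$. Evaluating at $v=v_j$, the terms $i=j,\,j\pm1$ give coefficients $1,0,0$, while for every other $i$ the convexity of $P$ forces $[v_{i-1},v_j,v_{i+1}]<0$: the chord $v_{i-1}v_{i+1}$ separates the single vertex $v_i$ from all remaining vertices, so $v_j$ lies on the side opposite $v_i$. Thus $G_P(v_j)=(n-1)v_j+\sum_{i\neq j,\,j\pm1}|c_i(v_j)|\,v_i$ is a strictly positive combination of at least two vertices (here $n\geq5$ is used), so after normalizing the third coordinate it is an honest convex combination, i.e.\ a point of $\mathrm{int}(\mathrm{conv}(P))$. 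Since the third coordinate of $G_P(v_j)$ is positive and the third-coordinate map is linear on $\R^3$, it stays positive on every affine convex combination of the $v_j$; hence $\mathrm{conv}(P)$ is disjoint from the preimage of the line at infinity under $G_P$ (cf.\ \eqref{eq:inftypre}), so $G_P$ is regular on all of $\mathrm{conv}(P)$. A projective map regular on a compact convex set carries it to the convex hull of the images of its extreme points, so $G_P(\mathrm{conv}(P))=\mathrm{conv}\{G_P(v_j)\}\subset\mathrm{int}(\mathrm{conv}(P))$.

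For part (3) I would begin from the classical fact that the pentagram map preserves convexity and sends a convex polygon strictly into its interior, so $\mathrm{conv}(D^{k+1}(P))\subset\mathrm{int}(\mathrm{conv}(D^{k}(P)))$; the hulls then form a decreasing sequence of nonempty compacts, their intersection $I$ is nonempty, and the equality of the two intersections in the statement follows at once from these inclusions. That $I$ is a single point and is a fixed point of $G_P$ is Glick's theorem \cite{glick2020limit}, which I would invoke. The refinement supplied by part (2) is the uniqueness: because $G_P(\mathrm{conv}(P))\subset\mathrm{int}(\mathrm{conv}(P))$, the map $G_P$ is a strict contraction of the Hilbert metric of $\mathrm{conv}(P)$ (Birkhoff), and therefore has exactly one fixed point there, necessarily interior and globally attracting; the limit point, being a fixed point of $G_P$ lying in $\mathrm{conv}(P)$, must coincide with it. I expect the genuine obstacle to lie precisely here, in tying the purely linear-algebraic operator $G_P$ to the pentagram dynamics so that its fixed point is the orbit's limit point; my plan is to lean on Glick's analysis for that identification and to contribute only the clean uniqueness and attractivity through the Hilbert-metric contraction furnished by part (2), while in part (1) the difficulty is merely organizational, namely verifying the displayed identity without drowning in indices, which the $\GL$-equivariance helps to tame by normalizing the configuration.
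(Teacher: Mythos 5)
Your proposal is correct, and it diverges from the paper precisely in the one place where the paper does original work. For items 1 and 2 the paper simply cites \cite{glick2020limit} (Propositions 3.3 and 4.1, noting that the proof of the latter actually yields the strict inclusion into the interior); your direct vertex computation --- $G_P(v_j)=(n-1)v_j+\sum_{i\neq j,\,j\pm1}|c_i(v_j)|\,v_i$ with positive coefficients because the chord $v_{i-1}v_{i+1}$ separates $v_i$ from all other vertices --- is essentially Glick's own argument, and it is sound provided you also observe that the vertices carrying positive weight (at least three of them once $n\geq 5$) are not collinear, which is what places the image in the two-dimensional interior rather than merely in the hull. The genuine difference is the uniqueness of the fixed point in item 3. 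The paper argues elementarily: if $X\neq Y$ were two fixed points in $\mathrm{conv}(P)$, then $G_P^2$ would restrict to a Moebius transformation of the line $XY$ fixing both points, preserving both complementary intervals, and pushing both boundary intersection points strictly inward --- impossible, since for such a transformation one fixed point must attract and the other repel. You instead invoke Birkhoff's theorem: $G_P$ maps the cone over $\mathrm{conv}(P)$ into its interior with compact image, hence strictly contracts the Hilbert metric and has a unique, globally attracting fixed point. Both are valid; yours buys attractivity and a quantitative rate at the cost of importing Birkhoff's machinery, while the paper's is self-contained. (A minor attribution point: that $\bigcap_k\mathrm{conv}(D^k(P))$ is a single point is Schwartz's theorem \cite{schwartz1992pentagram}; Glick's contribution is that this point is fixed by $G_P$.)
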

\begin{proof}
\begin{enumerate}
\item See \cite[Proposition 3.3]{glick2020limit}.
\item See \cite[Proposition 4.1]{glick2020limit}. The statement of that proposition is that $G_P(\mathrm{conv}(P)) \subset \mathrm{conv}(P)$, but it is actually proved that $G_P(\mathrm{conv}(P)) \subset \mathrm{int}(\mathrm{conv}(P))$. 
\item The existence of the limit point follows from \cite[Theorem 3.1]{schwartz1992pentagram}. Denote that point by $X$. Then, by \cite[Proposition 1.2]{glick2020limit}, we have $G_P(X) = X$. So it remains to show that there are no other fixed points in $\mathrm{conv}(P)$. Assume that $Y \neq X$ is another fixed point, $G_P(Y) = Y$. Note that $Y$ must be in the interior of $P$, since  $G_P(\mathrm{conv}(P)) \subset \mathrm{int}(\mathrm{conv}(P))$. Let $L$ be the line in $\RP^2$ through $X$ and $Y$. Then $G_P$ restricts to and defines a Moebius transformation of $L$. Furthermore, $G_P^2$ preserves each of the two connected components of $L \setminus \{X,Y\}$. Now, let $W$ and $Z$ be the intersection points of $L$ with the boundary of the polygon, as shown in Figure \ref{Fig:limitpoint}. 
\begin{figure}[b]
\centering
\begin{tikzpicture}[scale = 0.7, label distance = 0mm, thick]
\coordinate (A) at (2.25, 1.5);
\coordinate (B) at (3,3);
\coordinate  [] (C) at (1.5,4.5);
\coordinate (D) at (-0.75,3.75);
\coordinate (E) at (-0.75, 2.25);
\coordinate [label =$\scriptstyle X$] (X) at (0.75,3);
\coordinate [label =$\scriptstyle Y$] (Y) at (1.95,2.7);
\coordinate [label =$\scriptstyle L$] () at (4.2,2);

\draw  [ name path = poly]   (A) -- (B) -- (C) -- (D) -- (E) -- cycle;
\path [name path = AB] (A) -- (B);
\path [name path = DE] (D) -- (E);

%

\draw[fill=black] (A) circle (.3ex);
\draw[fill=black] (B) circle (.3ex);
\draw[fill=black] (C) circle (.3ex);
\draw[fill=black] (D) circle (.3ex);
\draw[fill=black] (E) circle (.3ex);
\draw[fill=black] (X) circle (.3ex);
\draw[fill=black] (Y) circle (.3ex);
\draw [color = black, name path=XY] ($(X)!-2cm!(Y)$) -- ($(Y)!-2cm!(X)$);
\path [name intersections={of=AB and XY,by={Z}}];
\path [name intersections={of=DE and XY,by={W}}];
\draw[fill=black] (W) circle (.3ex);
\draw[fill=black] (Z) circle (.3ex);
\coordinate [label ={[shift={(0.1,0)}]225:$\scriptstyle W$}] () at (W);
\coordinate [label ={[shift={(-0.2,0)}]300:$\scriptstyle Z$}] () at (Z);
%
%
\end{tikzpicture}
\caption{To the proof of Proposition \ref{prop:glick}.}\label{Fig:limitpoint}
\end{figure}
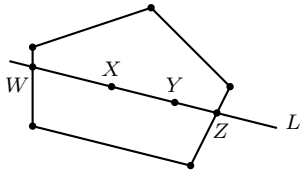
Then, since $G_P(\mathrm{conv}(P)) \subset \mathrm{int}(\mathrm{conv}(P))$, it follows that $G_P^2$ maps $W$ inside the interval $WX$ and $Z$ inside the interval $YZ$. At the same time, this Moebius transformation preserves $X$ and $Y$. But Moebius transformations with these properties do not exist. Indeed, for any non-trivial Moebius transformation which has two fixed points and preserves both of the intervals between those points, one of the fixed points must be attractive and the other one repelling. So, if $W$ is moved by $G_P^2$ towards $X$, then $Z$ should be moved away from $Y$. Thus, it is indeed not possible that $G_P$ has two fixed points in $\mathrm{conv}(P)$.\qedhere

\end{enumerate}
\end{proof}

\medskip

 \section{Convexity persists if and only if Glick's operator is affine}\label{sec:gen2}
We now prove the equivalence $1 \Leftrightarrow 3$: for a convex polygon $P$, all its successive images under the inverse pentagram map $S$ are convex if and only if the projective mapping $G_P$ is affine. To that end, we will reformulate the former condition in terms of the dual polygon. The following is well-known.
 \begin{proposition}\label{prop:duality}
 Projective duality intertwines the inverse pentagram map $S$ with the direct pentagram map $D = S^{-1}$. In other words,
 $
 S(P)^* = D(P^*).
 $
 \end{proposition}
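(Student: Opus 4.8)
The plan is to derive the identity purely from the incidence-reversing nature of projective duality, tracing the construction of $D(P^*)$ step by step and matching it against the construction of $S(P)$. Fix a cyclic labeling of the vertices $v_1,\dots,v_n$ of $P$, and write $s_i=\overline{v_i v_{i+1}}$ for its sides. I will use repeatedly the two basic facts about duality: the dual of the line through two points is the intersection of the two dual lines, and, conversely, the dual of the intersection of two lines is the line through the two dual points. I also recall that the vertices of $P^*$ are the duals of the sides $s_i$, while the sides of $P^*$ are the duals of the vertices $v_i$.

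First I would identify the short diagonals of $P^*$. By the definition of $D$, these connect second-nearest vertices of $P^*$, i.e. the dual points of $s_{i-1}$ and $s_{i+1}$. Each such diagonal is a line in the dual plane, and by the first duality fact it is the dual of the point $s_{i-1}\cap s_{i+1}$. But this point is exactly the intersection of two second-nearest sides of $P$, hence a vertex of $S(P)$; call it $w_i$. Thus the short diagonals of $P^*$ are precisely the duals of the vertices of $S(P)$.

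Next I would form the vertices of $D(P^*)$, which are the intersections of consecutive short diagonals of $P^*$. Dualizing once more, the intersection of the two diagonals dual to $w_i$ and $w_{i+1}$ is the dual of the line $\overline{w_i w_{i+1}}$, that is, the dual of a side of $S(P)$. Therefore the vertices of $D(P^*)$ are exactly the duals of the sides of $S(P)$, and these are by definition the vertices of $S(P)^*$; matching up the sides in the same fashion yields $D(P^*)=S(P)^*$.

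The only real obstacle is the combinatorial bookkeeping: one must check that the operation \emph{connect second-nearest vertices} on $P^*$ dualizes precisely to \emph{intersect second-nearest sides} on $P$, and that \emph{intersect consecutive diagonals} dualizes to \emph{join consecutive vertices}, so that the cyclic labeling lines up with no off-by-one shift. I would also note that convexity guarantees all the diagonals and intersection points involved are well-defined and in general position, so no degeneration occurs; alternatively, since the identity is algebraic in the vertices, once verified for convex (equivalently, generic) polygons it extends by continuity.
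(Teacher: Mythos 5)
Your argument is correct and is essentially the paper's own proof: both identify the shortest diagonals of $P^*$ (joining second-nearest vertices of $P^*$, i.e.\ second-nearest sides of $P$) with the duals of the vertices of $S(P)$, and conclude the two polygons coincide. The paper stops once the sides are matched, whereas you dualize once more to match the vertices as well; that extra step is harmless but not needed.
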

 \begin{proof}
 The sides of $ S(P)^*$ are vertices of $S(P)$, which, by definition of $S$, are intersections of second-nearest sides of $P$. At the same time, the sides of $D(P^*)$ are shortest diagonals of $P^*$, i.e. diagonals connecting second nearest vertices. But second nearest vertices of $P^*$ are second nearest sides of $P$, so the diagonals connecting them are precisely the intersections of second-nearest sides of $P$. So, $ S(P)^* $ and $D(P^*)$ have the same sides and hence coincide, as claimed.
 \end{proof}
We now reformulate the convexity condition for  the polygon $S^k(P)$ in terms of its dual $D^k(P^*)$. Note that it is in general not true that the dual of a convex polygon is convex. In fact, this statement does not even make sense, since the definitions of convexity in the initial and dual planes require different structures. To define convexity in the initial plane, one needs to pick an affine chart, which is determined by choosing a \textit{line}. Likewise, convexity in the dual plane also becomes well-defined upon choice of a line, i.e. a \textit{point} in the initial plane.
So, in order to be able to talk about convexity in both the initial and dual planes, in the initial plane one needs to pick a line (which defines an affine chart) and a point (which defines an affine chart in the dual plane). The following is a folklore result. 
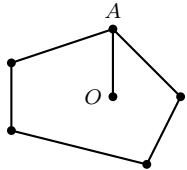
\begin{figure}[b]
\centering
\begin{tikzpicture}[thick, scale =0.9]
\coordinate (A) at (1.5, 1);
\coordinate (B) at (2,2);
\coordinate  [label=above:$\scriptstyle A$] (C) at (1,3);
\coordinate (D) at (-0.5,2.5);
\coordinate (E) at (-0.5, 1.5);
\coordinate [label=left:$\scriptstyle O$] (O) at (1,2);

\draw  (A) -- (B) -- (C) -- (D) -- (E) -- cycle;

%

\draw[fill=black] (A) circle (.3ex);
\draw[fill=black] (B) circle (.3ex);
\draw[fill=black] (C) circle (.3ex);
\draw[fill=black] (D) circle (.3ex);
\draw[fill=black] (E) circle (.3ex);
\draw[fill=black] (O) circle (.3ex);
\draw    (C) -- (O);
%
%
\end{tikzpicture}
\caption{To the proof of Lemma \ref{lemma:convduality}.}\label{Fig:convdual}
\end{figure}

\begin{lemma}\label{lemma:convduality}
Consider a projective plane $\P^2$ with a fixed line $L$ and point $O$. Assume that $P$ is a polygon in that plane which is convex in the affine chart $\P^2 \setminus L$ and contains the point $O$ in its interior. Then the dual polygon $P^*$ is convex in the affine chart  $(\P^2)^* \setminus O$ and contains the point $L$ in ins interior. 
\end{lemma}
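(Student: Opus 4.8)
The plan is to trivialize the projective data by a choice of coordinates and then recognize $P^*$ as a polar dual. First I would choose homogeneous coordinates $[x:y:z]$ on $\P^2$ with $L = \{z = 0\}$ and $O = [0:0:1]$; this is legitimate since $O \notin L$, because $O$ lies in the interior of a polygon contained in the affine chart $\P^2 \setminus L$. In these coordinates $\P^2 \setminus L$ is the affine plane $\{z = 1\} \cong \R^2$, in which $P$ is an ordinary convex polygon with the origin in its interior; let $v_1, \dots, v_n \in \R^2$ denote its vertices in cyclic order.

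Next I would set up the dual chart explicitly. Writing a line $ax+by+cz = 0$ as the point $[a:b:c] \in (\P^2)^*$, the point $O = [0:0:1]$ dualizes to the line $\{c = 0\}$, so the affine chart $(\P^2)^* \setminus O$ is $\{c \neq 0\}$ with affine coordinate $w := (a/c,\, b/c) \in \R^2$. Under this identification the point $w$ represents the affine line $\langle v, w\rangle = -1$ in the running variable $v \in \R^2$, and the distinguished point $L = [0:0:1]$ becomes the origin $w = 0$. Since $O$ lies strictly inside $P$, no side of $P$ passes through $O$, so all $n$ vertices of $P^*$ (the sides of $P$) indeed lie in this chart. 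The lemma thus reduces to the assertion that $P^*$, viewed in the $w$-coordinate, is a convex polygon with the origin in its interior.

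The key step is to identify $P^*$ with the half-plane intersection $Q := \{\, w \in \R^2 : \langle v_i, w\rangle \geq -1,\ i = 1, \dots, n \,\}$, which is manifestly convex. I would check $Q = P^*$ by comparing vertices: a vertex of $Q$ is a point $w$ where two consecutive constraints are tight, $\langle v_i, w\rangle = \langle v_{i+1}, w\rangle = -1$, meaning the line $\langle v, w\rangle = -1$ passes through $v_i$ and $v_{i+1}$; this line is exactly the side $s_i$ of $P$, i.e. the $i$-th vertex of $P^*$, and consecutive constraints yield consecutive vertices. It then remains to verify that (i) $0 \in \mathrm{int}(Q)$ and (ii) $Q$ is bounded. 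Part (i) is immediate and is precisely the content of Figure~\ref{Fig:convdual}: for each vertex $v_i$ of $P$ one has $\langle v_i, 0\rangle = 0 > -1$, so the origin $L$ lies strictly on the interior side of every edge $\langle v_i, \cdot\rangle = -1$ of $P^*$. For (ii), I would use that $O$ is interior to $P$: choosing $r > 0$ with $B_r(0) \subseteq P = \mathrm{conv}(v_1, \dots, v_n)$ and, for $w \in Q \setminus \{0\}$, writing the point $-r\,w/|w| \in B_r(0)$ as a convex combination $\sum_j \lambda_j v_j$, the constraints give $-r|w| = \langle -r\, w/|w|,\, w\rangle = \sum_j \lambda_j \langle v_j, w\rangle \geq -1$, whence $|w| \leq 1/r$ and $Q \subseteq B_{1/r}(0)$.

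Combining these facts, $P^* = Q$ is a bounded convex polygon with the origin in its interior; translating back through the chart identifications above, this says exactly that $P^*$ is convex in $(\P^2)^* \setminus O$ and contains the point $L$ in its interior, which is the claim. I expect the only genuinely delicate point to be the boundedness in (ii): this is where the hypothesis that $O$ lies in the \emph{interior} of $P$ is essential, being the projective-dual incarnation of the standard fact that the polar dual of a convex body is bounded precisely when the body contains the origin in its interior. The remaining steps are bookkeeping of the affine/projective dictionary and of the sign inherent in passing between projective and polar duality.
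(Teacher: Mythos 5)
Your proof is correct, but it takes a genuinely different route from the paper's. The paper argues synthetically and coordinate-free: for each vertex $A$ of $P$, the affine segment $OA$ lies in $\mathrm{conv}(P)$ and is met by no side of $P$ non-adjacent to $A$, nor by $L$; since the lines avoiding both $O$ and $A$ form two connected components of $(\P^2)^*$ minus the two dual lines of $O$ and $A$, all these lines lie in one component, and dualizing converts this into the statement that every vertex of $P^*$ non-adjacent to the side dual to $A$, together with the point $L$, lies strictly on one side of the line dual to $A$ in the chart $(\P^2)^*\setminus O$ --- exactly the edge-by-edge criterion for convexity of $P^*$ with $L$ interior. You instead fix coordinates adapted to $L$ and $O$ and reduce the lemma to classical polar duality, identifying $P^*$ with the half-plane intersection $\{w:\langle v_i,w\rangle\ge -1\}$ and checking that the origin is interior and that this set is bounded. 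Both arguments are complete. The paper's is shorter and treats $L$ on the same footing as the non-adjacent sides; yours makes the mechanism explicit (in particular it isolates boundedness of the polar body as the precise place where $O\in\mathrm{int}(P)$ enters) and in effect proves the more general statement recorded in the remark following the lemma, namely that the polar dual of a convex body containing the origin in its interior is again such a body. The one step you leave slightly implicit is that the cyclic boundary of your polytope $Q$ really is the closed polygonal curve $P^*$, with no redundant constraints and no vertices arising from non-consecutive tight constraints; this follows from the standard face correspondence for polars of polygons all of whose listed vertices are extreme points, so it is a gloss rather than a gap.
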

\begin{remark}
The choice of a point $O$ turns $\P^2 \setminus L$ into a vector space. In that setting, the lemma can be reformulated by saying that the dual of a convex polygon containing the origin is also a convex polygon containing the origin. This is in fact true for any convex set, with an appropriate definition of duality.
\end{remark}
\begin{proof}[Proof of Lemma \ref{lemma:convduality}]
Figure \ref{Fig:convdual} shows a polygon $P$ in the affine chart $\P^2 \setminus L$. Let $A$ be an arbitrary vertex of $P$. Consider the straight line interval connecting $O$ to $A$. Then none of the sides of $P$ non-adjacent to $A$ intersect that interval, and neither does the line at infinity $L$. Therefore, all the sides of $P$ non-adjacent to $A$, as well as the line $L$ belong to the same connected component in the space of lines in $\P^2 \setminus \{O,A\}$. But this means that all vertices of $P^*$ not adjacent to the side $A$, as well as the point $L$, belong to the same connected component of $(\P^2)^* \setminus (O \cup A)$. In other words, these points lie on the same side of the line $A$ in the affine plane $(\P^2)^* \setminus O$, and since this holds for every side $A$ of $P^*$, this precisely means that $P^*$ is convex and contains the point $L$ in its interior.  
\end{proof}

Now, consider a convex polygon $P$ in the affine plane, and let $L$ be the line at infinity. Fix a point~$O$ in the interior of $P$. Then, by Lemma \ref{lemma:convduality}, the dual polygon $P^*$ is convex in the affine chart  $(\P^2)^* \setminus O$ and contains the point $L$ in its interior. 
\begin{proposition}\label{prop:convlim}
For any given $k>0$, the polygon $S^k(P)$ is convex if and only its dual $D^k(P^*)$ contains the point $L \in (\P^2)^*$ in its interior.
\end{proposition}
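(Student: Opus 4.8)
The plan is to translate the convexity of $S^k(P)$ into a statement about its dual, which by iterating Proposition \ref{prop:duality} equals $S^k(P)^* = D^k(P^*)$, and then to exploit the fact that the \emph{direct} pentagram map $D$ preserves convexity. The first step is to fix the auxiliary point $O$ (used to define convexity in the dual plane) to be the limit point $X$ of the pentagram orbit $D^k(P)$; by Proposition \ref{prop:glick}(3) this point lies in $\mathrm{int}(P)$, so it is an admissible choice. With this choice, Lemma \ref{lemma:convduality} applied to $P$ shows that $P^*$ is convex in the chart $(\P^2)^* \setminus O$, and since the direct pentagram map sends a polygon convex in a fixed affine chart to a polygon convex in the same chart, every $D^k(P^*)$ is convex there. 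Crucially, this holds unconditionally, independent of whether $S^k(P)$ is convex.

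Next I would record the biconditional form of Lemma \ref{lemma:convduality}. Because projective duality is involutive, $(R^*)^* = R$, applying the lemma both in $\P^2$ and in $(\P^2)^*$ yields the equivalence: a polygon $R$ is convex in $\P^2\setminus L$ with $O \in \mathrm{int}(R)$ if and only if $R^*$ is convex in $(\P^2)^*\setminus O$ with $L \in \mathrm{int}(R^*)$. Taking $R = S^k(P)$, so that $R^* = D^k(P^*)$, and using that $D^k(P^*)$ is always convex, the right-hand side collapses to the single condition $L \in \mathrm{int}(D^k(P^*))$. Thus the proposition reduces to showing that the left-hand condition, namely ``$S^k(P)$ is convex and $O \in \mathrm{int}(S^k(P))$'', is equivalent to ``$S^k(P)$ is convex'' alone.

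The direction that assumes $L \in \mathrm{int}(D^k(P^*))$ is then immediate: the biconditional above, combined with the (unconditional) convexity of $D^k(P^*)$, forces $S^k(P)$ to be convex. The substantive content lies in the converse, where I must upgrade ``$S^k(P)$ convex'' to ``$O \in \mathrm{int}(S^k(P))$'', and this is the main obstacle. I would resolve it using the identity $D^m(S^k(P)) = D^{m-k}(P)$, which shows that the forward pentagram orbit of $S^k(P)$ is merely a time-shift of the orbit of $P$ and therefore has the same limit point $X = O$. Concretely, assuming $S^k(P)$ convex, Proposition \ref{prop:glick}(3) applies to it and gives that $\bigcap_{m\geq 0}\mathrm{conv}(D^m(S^k(P)))$ is a single point contained in $\mathrm{int}(S^k(P))$; but the tail of this intersection is $\bigcap_{m\geq k}\mathrm{conv}(D^{m-k}(P)) = \bigcap_{j\geq 0}\mathrm{conv}(D^j(P)) = \{X\}$, so that single point must be $X = O$. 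Hence $O \in \mathrm{int}(S^k(P))$, which closes the argument.

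The points I would check with care are minor but necessary: that $S^k(P)$ has at least five vertices (it has the same number as $P$) so that Proposition \ref{prop:glick}(3) is applicable; that the chosen $O = X$ genuinely lies in $\mathrm{int}(P)$ so that the dual setup of Lemma \ref{lemma:convduality} is valid; and that the single point in the intersection above is nonempty, which is exactly the conclusion of Proposition \ref{prop:glick}(3), so the tail computation $Y \subseteq \{X\}$ forces $Y = X$ rather than $Y = \varnothing$.
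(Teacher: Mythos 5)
Your proof is correct, and its overall skeleton matches the paper's: dualize via Proposition \ref{prop:duality}, apply Lemma \ref{lemma:convduality} in both directions (the reverse direction via involutivity of duality, exactly as the paper does implicitly), use that $D$ preserves convexity to get the unconditional convexity of $D^k(P^*)$, and reduce everything to showing that $O$ lies in the interior of $S^k(P)$. The one place where you genuinely diverge is that last sub-step, and there you work much harder than necessary: you must specialize $O$ to the limit point $X$, invoke Proposition \ref{prop:glick}(3) for the polygon $S^k(P)$ (hence Schwartz's convergence theorem), and run a tail-of-the-intersection argument. The paper instead observes that $D^k(S^k(P)) = P$, so $P \subset \mathrm{conv}(S^k(P))$ and therefore \emph{any} $O \in \mathrm{int}(P)$ automatically lies in $\mathrm{int}(S^k(P))$ --- a one-line containment needing no limit points. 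Your route is valid, but note two costs: (i) it proves the statement only for the particular chart $(\P^2)^* \setminus X$, so to recover the proposition as stated (for the $O$ fixed arbitrarily in the preceding paragraph) you should add the remark that the interior of a projectively convex polygon is chart-independent, being the disk component of the complement of its boundary in $(\P^2)^*$; and (ii) it imports the heaviest result in Section \ref{sec:glick} into a step that is elementary. Everything you flag as needing care (the vertex count, $X \in \mathrm{int}(P)$, nonemptiness of the intersection) does check out.
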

\begin{proof}
Assume that the polygon $S^k(P)$ is convex. Observe that $D^k(S^k(P)) = P$, so $S^k(P)$ contains the polygon $P$ and hence the point $O$ in its interior. Therefore, since $S^k(P)$ is convex and contains the point $O$, by Lemma \ref{lemma:convduality} the polygon $D^k(P^*) = S^k(P)^*$ contains the point $L$. Conversely, assume that  $D^k(P^*)$ contains the point $L$. Note that since the pentagram map $D$ preserves convexity, and $P^*$ is convex, we also have that $D^k(P^*)$ is convex. Therefore, since $D^k(P^*)$ is convex and contains the point $L$, by Lemma~\ref{lemma:convduality} we have that $S^k(P) = D^k(P^*)^*$ is convex too, as desired.
\end{proof}
\begin{corollary}\label{cor:duallimit}
Each of the polygons $S^k(P)$, where $k > 0$, is convex if and only if the limit point of the sequence  $D^k(P^*)$ is the line at infinity $L$.
\end{corollary}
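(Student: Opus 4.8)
The plan is to combine Proposition \ref{prop:convlim} with part (3) of Proposition \ref{prop:glick}, applying the latter to the dual polygon $P^*$ rather than to $P$ itself. Since $P^*$ is a convex polygon with at least five vertices in the affine chart $(\P^2)^* \setminus O$ (by Lemma \ref{lemma:convduality}), Proposition \ref{prop:glick}(3) applies to it and identifies the limit point of the sequence $D^k(P^*)$ with the single point
\[
\bigcap_{k \geq 0} \mathrm{conv}(D^k(P^*)) = \bigcap_{k \geq 0} \mathrm{int}(\mathrm{conv}(D^k(P^*))).
\]

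For the forward implication I would assume that $S^k(P)$ is convex for every $k > 0$. By Proposition \ref{prop:convlim} this is equivalent to $L \in \mathrm{int}(\mathrm{conv}(D^k(P^*)))$ for all such $k$, and the same containment for $k = 0$ follows from the initial application of Lemma \ref{lemma:convduality} to $P$. Thus $L$ lies in the intersection displayed above, which is a single point, namely the limit point; hence the limit point must be $L$. For the converse I would assume the limit point equals $L$. Then $L$ is the unique point of that intersection, so in particular $L \in \mathrm{int}(\mathrm{conv}(D^k(P^*)))$ for every $k \geq 0$, and Proposition \ref{prop:convlim} again gives that each $S^k(P)$ with $k > 0$ is convex.

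The step I expect to require the most care — and the crux of the argument — is that I need membership of $L$ in the \emph{interior} of each $\mathrm{conv}(D^k(P^*))$, not merely in each closed hull. This is precisely why I invoke the interior form of the nested intersection in Proposition \ref{prop:glick}(3), where the intersection of the closed convex hulls is shown to coincide with the intersection of their interiors. Were only the closed-hull statement available, the bare knowledge that $L = \bigcap_{k} \mathrm{conv}(D^k(P^*))$ would not by itself place $L$ in each interior, and the passage back to convexity of $S^k(P)$ through Proposition \ref{prop:convlim} would fail.
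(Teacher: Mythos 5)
Your proof is correct and follows essentially the same route as the paper, which likewise combines Proposition \ref{prop:convlim} with the fact from Proposition \ref{prop:glick}(3) that the limit point is the unique point of the nested intersection. If anything, you are more careful than the paper's one-line argument, which phrases the intersection in terms of closed hulls and leaves the passage to interiors (needed to invoke Proposition \ref{prop:convlim}) implicit in the equality of the two intersections.
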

\begin{proof}
The limit point is the unique point in the intersection $\bigcap_{k > 0} \mathrm{conv}(D^k(P^*))$, so $L \in \mathrm{conv}(D^k(P^*))$ for every $k > 0$ if and only if the limit point is $L$.
\end{proof}
Now, to complete the proof of Theorem \ref{thm1}, it suffices to show that the limit point of the pentagram orbit $D^k(P^*)$ is $L$ if and only if the projective mapping $G_P$ is affine. First, assume that $L$ is the limit point for $D^k(P^*)$. Then, $L$, viewed as a point in the dual plane, is fixed by the mapping $G_{P^*}$ (Proposition~\ref{prop:glick}, Item 3). But in view of the equality $G_{P^*} = G_P^*$  (Proposition \ref{prop:glick}, Item 1), this is equivalent to saying that $L$, viewed as the line in the initial plane, is invariant under $G_P$. So, $G_P$ preserves the line at infinity and hence is affine. Conversely, assume that the mapping $G_P$ is affine. Then the line at infinity $L$, viewed as a point in the dual plane, is a fixed point of $G_{P^*}$. And since $L$ is inside $P^*$, it follows from Proposition \ref{prop:glick}, Item 3 that $L$ must be the limit point of $D^k(P^*)$, as desired. Thus, Theorem \ref{thm1} is proved. 

\begin{remark}\label{rem:ref}
As pointed out by the referee, one can use Corollary \ref{cor:duallimit} to directly prove that the set of polygons which remain convex under iterations of $S$  is of codimension two. The idea is as follows. Let $\mathcal P_n$ be the space of convex $n$-gons. Consider the map $\zeta \colon \mathcal P_n \to (\P^2)^*$ which takes a polygon $P$ to the limit point of the sequence $D^k(P^*)$. Then, by Corollary \ref{cor:duallimit}, the set of polygons which remain convex under iterations of $S$ is the preimage under $\zeta$ of the line at infinity. Assuming the map $\zeta$ is smooth (which follows, for example, from Glick's construction), it must be a submersion because it commutes with the projective group action. So the preimage of the line at infinity (which is a single point in  $(\P^2)^*$) is a codimension two submanifold, as desired.
\end{remark}

\medskip

\section{Case study: pentagons}

\begin{figure}[t]
\centering
\includegraphics[width = 6cm]{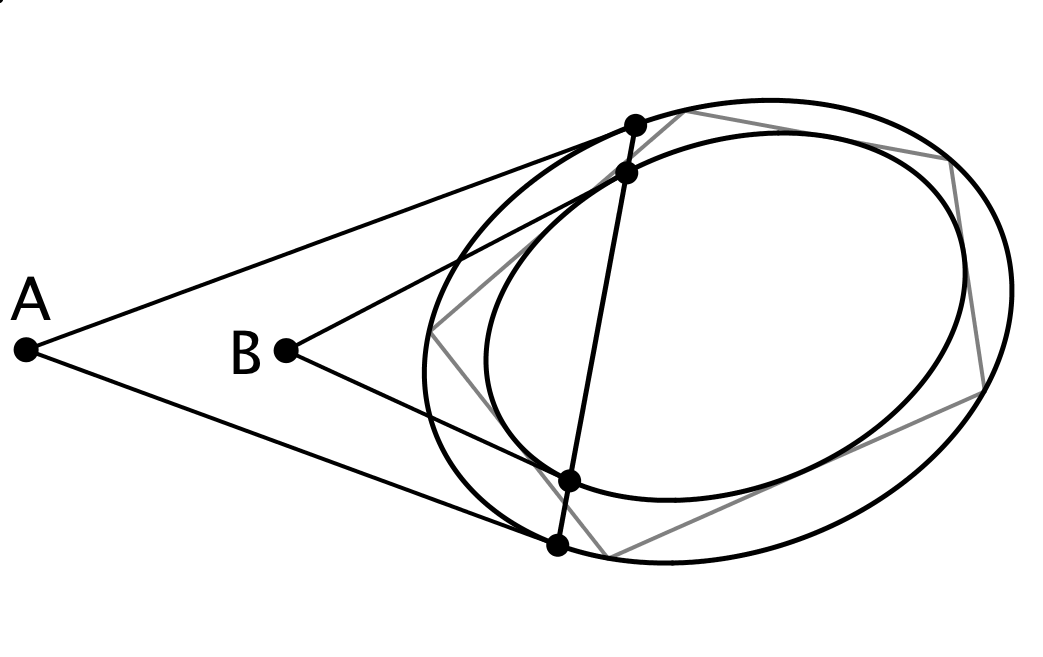}
\caption{To the definition of the map $R_P(A) := B$.}\label{Fig:rmap}
\end{figure}

We now prove the equivalence $1 \Leftrightarrow 4$ (i.e. Theorem \ref{thm2}): for a convex pentagon $P$, all its successive images under the inverse pentagram map $S$ are convex if and only if the conic inscribed in $P$ and the conic circumscribed about $P$ are concentric. The strategy of the proof is as follows. Consider a pentagon $P$, and let its inscribed and circumscribed conics be given by homogeneous quadratic forms $Q_I$ and $Q_C$ respectively. Let $R_P:= Q_I^{-1} Q_C$. Note that since the forms $Q_I$ and $Q_C$ are defined up to a constant factor, $R_P$ is well-defined as a projective map $\P^2 \to \P^2$. The geometric meaning of that map is the following: it takes a point $A \in \P^2$ to point $B \in \P^2$ such that the polar line of $A$ with respect to the circumscribed conic is the same as the polar line of $B$ with respect to the inscribed conic, see Figure~\ref{Fig:rmap}. We will show that the operator $R_P$ has all the same properties as Glick's operator $G_P$. From that it follows that persistence of convexity is equivalent to $R_P$ being affine. But $R_P$ is affine precisely when the inscribed and circumscribed conics are concentric.

To describe the properties of $R_P$, we use the following classical theorem of Kasner. Let $D$, as before, be the pentagram map, and let $I$ be the map which sends a pentagon $P$ to a new pentagon whose vertices are the tangency points of the sides of $P$ and the inscribed conic, see left picture in Figure \ref{Fig:Imap}. 

\begin{figure}[b]
\centering
\includegraphics[width = 6cm]{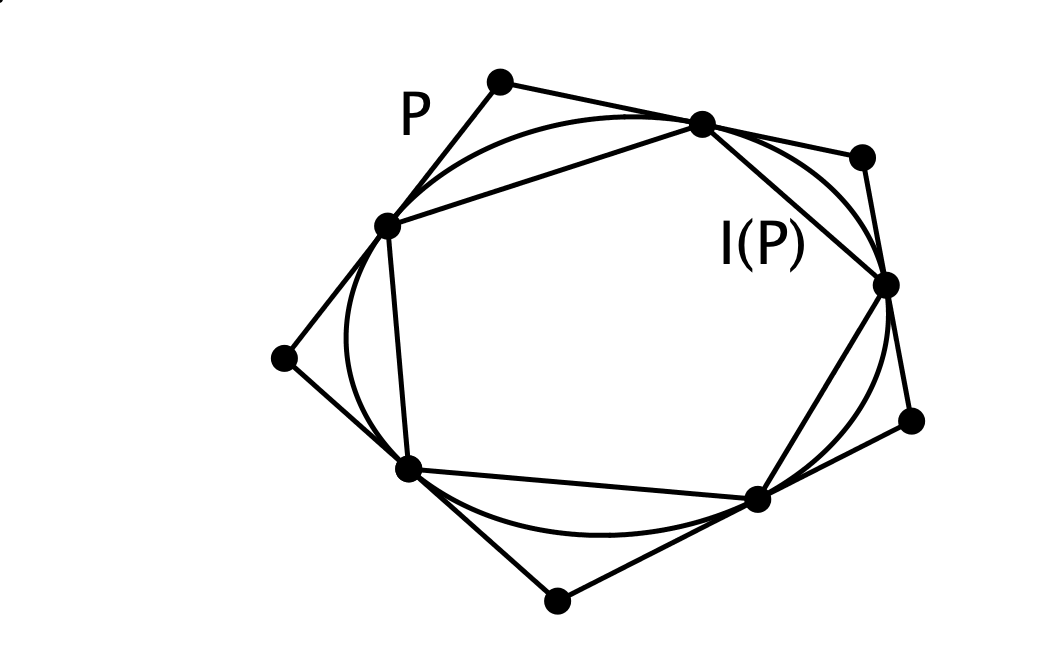} \qquad
\includegraphics[width = 7cm]{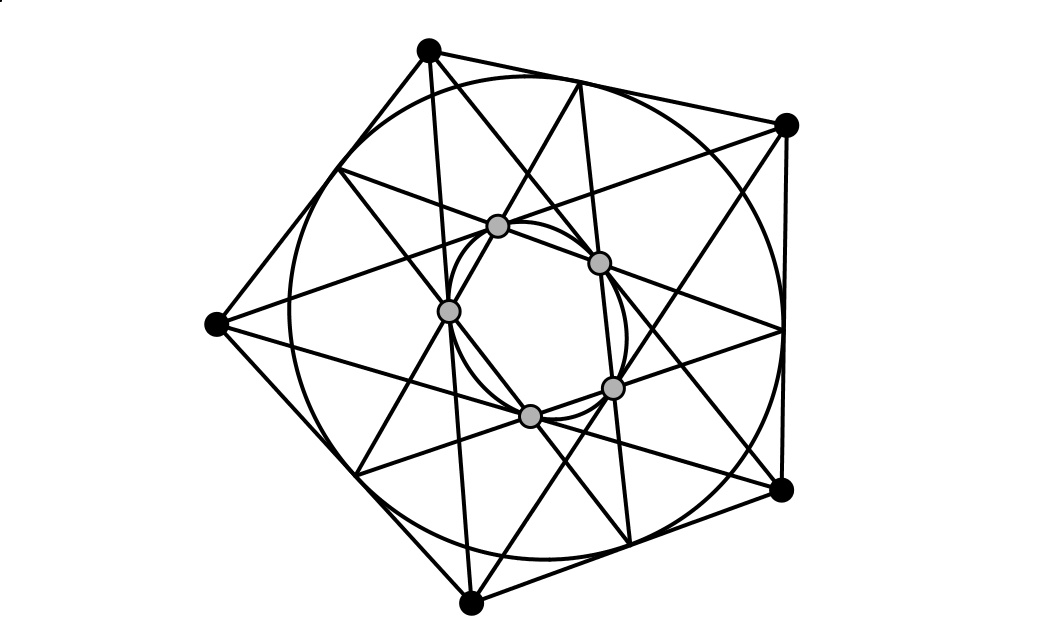}
\caption{Definition of the operation $I$ and Kasner's theorem.}\label{Fig:Imap}
\end{figure}

\begin{theorem}[Kasner \cite{kasner1928projective}] The operations $D$ and $I$ on pentagons commute: $DI = ID$, see right picture in Figure~\ref{Fig:Imap}. \end{theorem}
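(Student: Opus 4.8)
The plan is to prove the sharper statement that $D(I(P))=I(D(P))$ as labelled pentagons. Both $D$ and $I$ are equivariant under $\PGL(3,\R)$ (the inscribed conic, its tangency points, and the diagonal intersections are all projectively natural), so I am free to normalize projectively at the end. The central reformulation is that $I$ is \emph{polar duality with respect to the inscribed conic}. Writing $Q_I(K)$ for the conic inscribed in a pentagon $K$ and $\pi_K$ for the associated polarity, the tangency point of a side of $K$ is exactly the $\pi_K$-pole of that side, while the $\pi_K$-polar of a vertex $v_i$ is the chord of contact joining the two neighbouring tangency points. Hence, denoting by $w_1,\dots,w_5$ the tangency points of the sides of $P$, one has $I(P)=\pi_P(P)$: its vertices are the $w_i$ and its sides are the lines $w_{i-1}w_i$.

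With this in hand I would first make $D(I(P))$ completely explicit. Realizing the duality of Proposition~\ref{prop:duality} concretely through the polarity $\pi_P$ gives $\pi_P\circ S = D\circ\pi_P$ (up to the cyclic relabelling of vertices induced by the duality), and therefore $D(I(P))=D(\pi_P(P))=\pi_P(S(P))$. Reading off sides, the sides of $\pi_P(S(P))$ are the $\pi_P$-polars of the vertices $s_i\cap s_{i+2}$ of $S(P)$, namely the lines $w_iw_{i+2}$ — the short diagonals of the pentagon $I(P)=(w_1,\dots,w_5)$. Consequently $D(I(P))$ is the pentagon whose sides are the five lines $w_iw_{i+2}$ and whose vertices are the points $b_j := w_jw_{j+2}\cap w_{j+1}w_{j+3}$.

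It then remains to identify $I(D(P))$ with this pentagon, i.e.\ to show that $I(D(P))$ has the same five vertices $b_j$. Since the sides of $D(P)$ lie along the diagonals $v_iv_{i+2}$ of $P$, the vertices of $I(D(P))$ are the points where the inscribed conic $Q_I(D(P))$ touches these diagonals. Thus the whole theorem reduces to the single incidence statement: \emph{for each $j$, the point $b_j=w_jw_{j+2}\cap w_{j+1}w_{j+3}$ lies on a diagonal of $P$ and is the point of tangency of $Q_I(D(P))$ with the corresponding side of $D(P)$}. Equivalently, the five lines $w_iw_{i+2}$ are exactly the chords of contact joining consecutive tangency points of $Q_I(D(P))$.

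This incidence is where the genuine content of Kasner's theorem resides, and it is the step I expect to be the main obstacle: everything preceding it is formal, but $D$ and $S$ really do move the inscribed and circumscribed conics, so the claim cannot be obtained by duality alone. I would settle it by a direct projective computation after normalizing $Q_C(P)$ to the unit circle, which turns $P$ into five points $v_i=(\cos\theta_i,\sin\theta_i)$ and reduces the incidence to a rational identity in $\theta_1,\dots,\theta_5$; the tangency points $w_i$, the diagonal intersections $b_j$, and the inscribed conic of $D(P)$ are then all explicit. A coordinate-free alternative is a pole–polar chase: computing the $\pi_P$-pole of each diagonal $v_iv_{i+2}$ and checking that it lies on the inscribed conic $Q_I(S(P))$ of $S(P)$ shows precisely that $v_iv_{i+2}$ is tangent to $\pi_P(Q_I(S(P)))$, which identifies that conic with $Q_I(D(P))$ and makes the tangency points coincide with the $b_j$. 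Either route closes the argument, with the trigonometric normalization being the more mechanical but more reliable one.
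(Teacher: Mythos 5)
The paper offers no proof of this statement at all: it is imported as a classical theorem with a citation to Kasner's 1928 article, so there is no in-house argument to compare yours against, and your proposal has to stand on its own. Its preliminary reductions are correct. $I(P)$ is indeed the polar dual of $P$ with respect to the inscribed conic, and your identification of $D(I(P))$ as the pentagon with vertices $b_j = w_jw_{j+2}\cap w_{j+1}w_{j+3}$ is right --- although this is immediate from applying the definition of the pentagram map to the pentagon $I(P)=(w_1,\dots,w_5)$, so the detour through $\pi_P\circ S = D\circ\pi_P$ buys nothing. As you yourself observe, the entire content of Kasner's theorem is then concentrated in the final incidence statement: that each $b_j$ lies on the appropriate diagonal $v_iv_{i+2}$ of $P$ and is the point where the conic inscribed in $D(P)$ touches that diagonal.

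That statement is exactly where your argument stops. You name two ways one might verify it (a trigonometric normalization with $v_i=(\cos\theta_i,\sin\theta_i)$, or a pole--polar chase through $\pi_P(Q_I(S(P)))$), but you carry out neither; you only describe how they would be set up. Since everything preceding this step is formal bookkeeping, what you have is a proof strategy with the proof missing. To close it you would need to actually execute one of the computations: either derive and verify the rational identity in $\theta_1,\dots,\theta_5$, or, in the pole--polar route, explicitly check that the five poles $\pi_P(v_iv_{i+2})$ lie on $Q_I(S(P))$ (which is itself a nontrivial five-fold incidence, since $Q_I(S(P))$ is determined by the sides of $S(P)$, not of $P$) and then separately check that the resulting tangency points are the $b_j$ --- the tangency of the conic to the five diagonals alone does not yet locate the contact points. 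Until one of these is done, the theorem is asserted rather than proved.
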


\begin{figure}[t]
\centering
\includegraphics[width = 7cm]{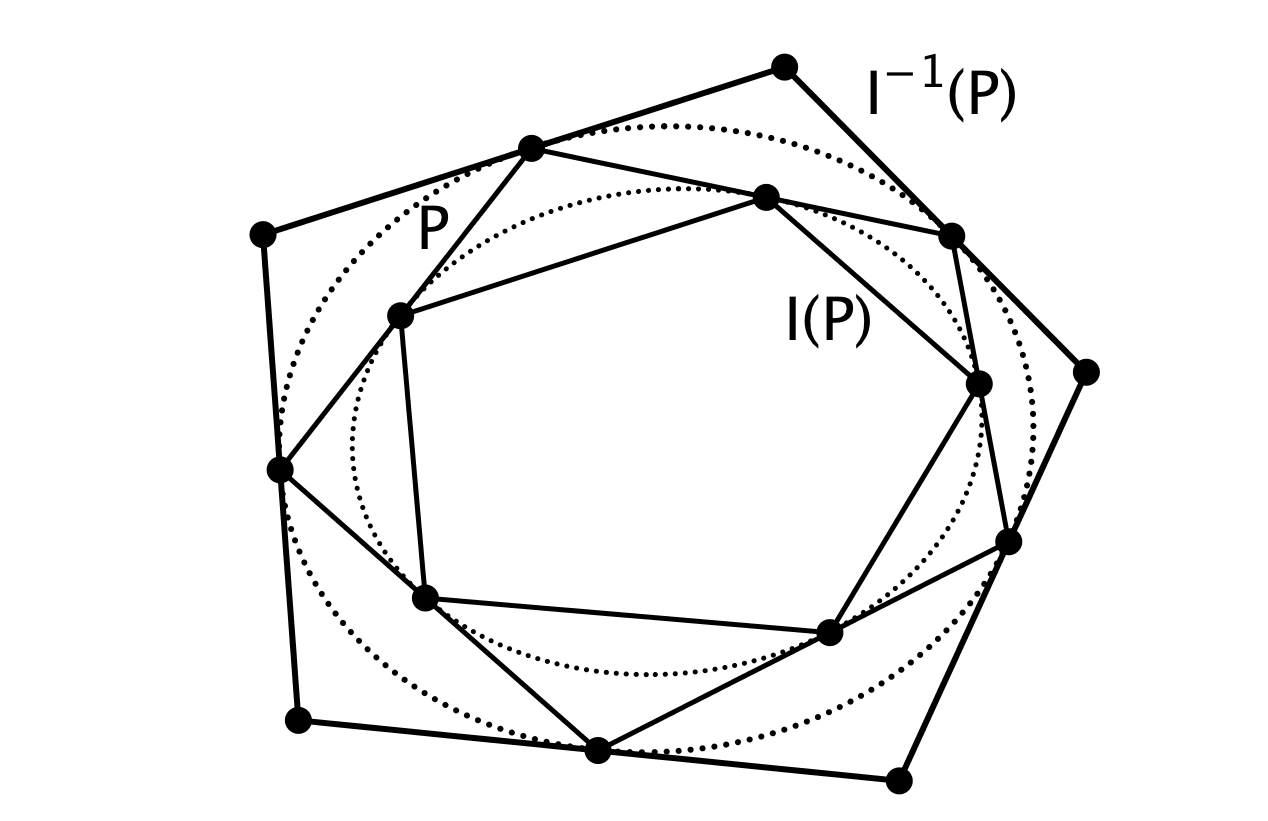}
\caption{To the proof of Corollary \ref{cor:Kasner}.}\label{Fig:Imap2}
\end{figure}
\begin{corollary}\label{cor:Kasner}
For any pentagon $P$, one has $R_{D(P)} = R_P$.
\end{corollary}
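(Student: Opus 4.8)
The plan is to translate the statement into the language of the two conics attached to $P$ and to feed Kasner's theorem through the elementary ``inscribe'' operation $I$. Writing $Q_C(P)$ and $Q_I(P)$ for the circumscribed and inscribed conics (as symmetric matrices, defined up to scale), so that $R_P = Q_I(P)^{-1}Q_C(P)$, the first thing I would record is a purely projective-duality fact: the vertices of $I(P)$ are the tangency points of the sides of $P$ with $Q_I(P)$, hence they lie on $Q_I(P)$, and since five points determine a conic this gives $Q_C(I(P)) = Q_I(P)$. Dualizing the whole configuration with respect to $Q_I(P)$ gives in the same way $Q_I(I(P)) = Q_I(P)\,Q_C(P)^{-1}Q_I(P)$. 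Substituting these two formulas into the definition of $R$ shows immediately that $R_{I(P)} = R_P$, and more generally $R_{I^k(P)} = R_P$ for all $k$. Thus $R$ is already invariant under the inscribe operation, and the whole task is to upgrade this to invariance under $D$.

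Next I would use Kasner's theorem to reduce the claim about $D$ to one identity among circumscribed conics only. Using $Q_I(P) = Q_C(I(P))$ I rewrite $R_P = Q_C(I(P))^{-1}Q_C(P)$, an expression involving only the functional $P \mapsto Q_C(P)$ evaluated at $P$ and at $I(P)$. Applying the same formula to $D(P)$ and invoking $I(D(P)) = D(I(P))$ (Kasner) gives $R_{D(P)} = Q_C(D(I(P)))^{-1}Q_C(D(P))$. Comparing the two expressions, the desired equality $R_{D(P)} = R_P$ becomes equivalent to the assertion that the transfer operator $\Phi(P) := Q_C(D(P))\,Q_C(P)^{-1}$, which records how the circumscribed conic moves under one step of the pentagram map, is itself invariant under $I$. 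After this reduction Kasner has played its role, and everything rests on understanding the effect of $D$ on a single conic.

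The heart of the argument, and the step I expect to be the main obstacle, is therefore controlling $Q_C(D(P))$: note that neither $Q_C$ nor $Q_I$ is individually preserved by $D$, and the conic matrices do not commute, so no factor may simply be cancelled. My plan here is to dualize the vertices of $D(P)$ with respect to $Q_C(P)$. The vertices of $D(P)$ are intersections of consecutive diagonals of $P$, and the polar of such an intersection is the line joining the poles of the two diagonals; the pole of a diagonal (a chord of $Q_C(P)$) is the intersection of the tangents to $Q_C(P)$ at its endpoints, i.e. a ``pentagram vertex'' of the tangent pentagon $I^{-1}(P)$. Identifying the resulting polar lines as the sides of $S(I^{-1}(P)) = D^{-1}(I^{-1}(P))$ and applying Kasner once more turns this computation into the transformation law $Q_C(P)\,Q_C(D(P))^{-1}Q_C(P) = Q_C(D^{-1}(P))$, so that the circumscribed conics along the $D$-orbit form a ``geometric progression'' in exactly the sense they do along the $I$-orbit. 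The remaining and genuinely delicate task is to show that these two progressions share the common ratio that defines $R_P$; I would organize this by viewing $R_P$ as the common transition operator of the bi-infinite tower of alternating circumscribed and inscribed conics produced by iterating $I$, on which $D$ acts compatibly with the shift by Kasner. The subtle point to be handled with care is precisely the non-commutativity of the conic matrices, which is why the formal commutation $DI = ID$ alone does not suffice and the explicit duality computation above is needed to pin the transfer operator down.
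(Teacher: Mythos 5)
Your preliminary steps are sound: $Q_C(I(P)) = Q_I(P)$ and $Q_I(I(P)) = Q_I(P)Q_C(P)^{-1}Q_I(P)$ are correct, they do give $R_{I(P)} = R_P$, and the reduction of the claim to the $I$-invariance of the transfer operator $\Phi(P) = Q_C(D(P))Q_C(P)^{-1}$ is a valid (if purely formal) reformulation; the polarity computation giving $Q_C(P)Q_C(D(P))^{-1}Q_C(P) = Q_C(D^{-1}(P))$ also checks out. But the proof does not close. The identity $\Phi(P)=\Phi(I(P))$ is literally equivalent to $R_{D(P)}=R_P$, so nothing is gained until it is proved, and the ``two geometric progressions'' you end with cannot prove it: every relation you have derived (the three-term recursions $T_{j+1}=T_jT_{j-1}^{-1}T_j$ along the $I$-tower and $C_{k+1}=C_kC_{k-1}^{-1}C_k$ along the $D$-orbit, intertwined by Kasner) is a shift-equivariant constraint admitting solutions in which the ``common ratio'' genuinely varies along the orbit — already in the commutative model $T_j^{(k)} = a\,r^j s^k t^{jk}$ both recursions hold while $(T_1^{(k)})^{-1}T_0^{(k)}$ depends on $k$. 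So some input beyond these recursions is indispensable, and the proposal does not identify it; the step you yourself flag as ``remaining and genuinely delicate'' is the entire content of the statement.

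The missing ingredient is the one the paper uses. The sides of $P$ are polar to the vertices of $I(P)$ with respect to the inscribed conic and to the vertices of $I^{-1}(P)$ with respect to the circumscribed one, so $R_P$, viewed as a projective transformation, carries the pentagon $I^{-1}(P)$ to $I(P)$, hence $R_P(P) = I^2(P)$ because $I$ commutes with projective maps. Kasner's theorem then gives $R_{D(P)}(P) = D^{-1}\bigl(I^2(D(P))\bigr) = I^2(P)$ as well, so $R_{D(P)}^{-1}\circ R_P$ fixes the pentagon $P$, and the rigidity of a generic pentagon — its only projective automorphism is the identity — forces $R_{D(P)} = R_P$, with continuity covering the non-generic case. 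It is precisely this rigidity (a projective map of the plane is pinned down by its effect on the five vertices of a generic pentagon) that supplies the non-formal information your matrix identities lack; without it, or some substitute, your outline cannot be completed.
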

\begin{proof}
Consider Figure \ref{Fig:Imap2}. 
Observe that the sides of $P$ are polar to the vertices of $I(P)$ with respect to the inscribed conic and to the vertices of $I^{-1}(P)$ with respect to the circumscribed conic. Therefore, $R_P(I^{-1}(P)) = I(P)$. But since the map $I$ commutes with projective transformations, this is the same as to say that
$
R_P(P) = I^2(P).
$
At the same time, we have
\begin{align*}
R_{D(P)}(P) = D^{-1}(R_{D(P)}(D(P)))  =D^{-1} (I^2(D(P))) = I^2(P),
\end{align*}
where the last equality follows from Kasner's theorem. 
%
Thus, the mappings $R_{P}$ and $R_{D(P)}$ both map $P$ to $I^2(P)$, while $R_{D(P)}^{-1} \circ R_{P}$ maps $P$ to itself. But the only projective automorphism of a generic pentagon is the identity, so we must have $R_{D(P)} = R_P$ for generic, and hence, by continuity, for all polygons.
\end{proof}
\begin{remark}
One can also use a more precise version of Kasner's theorem which takes into account labeling of vertices to show that $R_P$ and $R_{D(P)}$ map every vertex of $P$ to the same vertex of $I^2(P)$ and hence coincide.
\end{remark}
\begin{proposition}\label{prop:glick2}
Let $P$ be a convex pentagon in the affine plane. Then the associated operator $R_P$ has all the properties listed in Proposition \ref{prop:glick}. Namely, the following is true:
\begin{enumerate}
\item For the dual pentagon $P^*$, one has $R_{P^*} = R_P^*$. 
\item One has $R_P(\mathrm{conv}(P)) \subset \mathrm{int}(\mathrm{conv}(P))$. 
\item The limit point 
$
\bigcap_{k \geq 0} \mathrm{conv}(D^k(P))
$
of the pentagram orbit $D^k(P)$ is a fixed point of $R_P$. Moreover, it is the only fixed point of $R_P$ in $\mathrm{conv}(P)$.
\end{enumerate}
\end{proposition}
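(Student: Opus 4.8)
The plan is to establish each of the three properties of $R_P$ by exploiting the single structural fact proved in Corollary \ref{cor:Kasner}, namely that $R_{D(P)} = R_P$, together with the explicit polar-duality description $R_P = Q_I^{-1}Q_C$. The philosophy is that $R_P$ behaves exactly like $G_P$ because it is likewise built from the pentagram dynamics in a projectively natural way, so the arguments of Proposition \ref{prop:glick} should transfer with minimal change.

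For Item 1, I would argue directly from the symmetric nature of the construction. Passing to the dual pentagon $P^*$ interchanges the inscribed and circumscribed conics: the conic inscribed in $P$ becomes circumscribed about $P^*$ and vice versa (polarity with respect to a conic is self-dual, and tangency of a side to the inscribed conic dualizes to incidence of a vertex with the dual conic). Concretely, if $Q_I$ and $Q_C$ are the forms for $P$, then the forms for $P^*$ are $Q_C^{-1}$ and $Q_I^{-1}$ up to scale, whence $R_{P^*} = (Q_C^{-1})^{-1}(Q_I^{-1}) = Q_C Q_I^{-1}$, which is exactly the adjoint (transpose-inverse) of $R_P = Q_I^{-1}Q_C$ once one remembers that a symmetric form equals its own transpose. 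So $R_{P^*} = R_P^*$, as claimed.

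For Item 3, I would use Corollary \ref{cor:Kasner} in precisely the way Proposition \ref{prop:glick}, Item 3 uses Glick's relation. Let $X$ be the limit point of the orbit $D^k(P)$. Since $R_{D^k(P)} = R_P$ for all $k$ by iterating Corollary \ref{cor:Kasner}, and since $R_{D^k(P)}$ is the map attached to $D^k(P)$ whose inscribed and circumscribed conics both shrink toward $X$, the fixed point of $R_P$ must coincide with $X$; alternatively, $X \in \mathrm{conv}(D^k(P))$ for all $k$ forces $R_P(X)=X$ by a limiting argument once Item 2 is known. Uniqueness of the fixed point in $\mathrm{conv}(P)$ then follows by the \emph{identical} M\"obius-dynamics argument already given in the proof of Proposition \ref{prop:glick}: a second fixed point $Y$ would yield a nontrivial M\"obius transformation of the line through $X$ and $Y$ fixing both points yet pushing both boundary intersection points inward, which is impossible. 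I expect to be able to cite that argument rather than repeat it.

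The main obstacle, and the item I would spend the most care on, is Item 2, the containment $R_P(\mathrm{conv}(P)) \subset \mathrm{int}(\mathrm{conv}(P))$. For $G_P$ this was established by Glick via an explicit computation; here I have no such formula ready, so I would instead derive the containment from the polar-coordinate description. The key observation is that $R_P = Q_I^{-1}Q_C$ sends a point $A$ to the point $B$ whose polar in the inscribed conic equals the polar of $A$ in the circumscribed conic. For $A \in \mathrm{conv}(P)$, its polar with respect to the circumscribed conic $Q_C$ is a line that misses the interior of that conic, hence misses $\mathrm{conv}(P)$; I must show that the point $B$ polar to this line with respect to the smaller inscribed conic $Q_I$ lands strictly inside $\mathrm{conv}(P)$. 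Because the inscribed conic lies strictly inside the circumscribed one, a line avoiding the interior of the larger conic is polar (in the smaller conic) to a point well inside the smaller conic and a fortiori inside the polygon. Making this monotonicity between nested conics rigorous, and checking it is strict so as to land in the interior, is the delicate part; once it is done, the remaining two items are largely formal consequences of Corollary \ref{cor:Kasner} and the duality in Item 1.
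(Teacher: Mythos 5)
Your proposal follows the paper's proof essentially step for step: Item 1 via $R_{P^*}=(Q_C^{-1})^{-1}Q_I^{-1}=Q_CQ_I^{-1}=R_P^*$, Item 2 via the pole--polar argument with the nested conics, and Item 3 via $R_P=R_{D^k(P)}$ preserving each $\mathrm{conv}(D^k(P))$ so that the intersection point is fixed, with uniqueness borrowed from the M\"obius argument of Proposition \ref{prop:glick}. The step you flag as delicate in Item 2 is exactly the paper's short argument (a point inside or on the circumscribed conic has polar line outside or tangent to it, hence disjoint from the strictly smaller inscribed conic, hence its pole in the inscribed conic lies strictly inside that conic and therefore inside $P$), so no additional machinery is needed.
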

\begin{proof}
\begin{enumerate}
\item The conic \textit{inscribed} in $P^*$ is the dual of the conic \textit{circumscribed} about $P$, i.e. it is given by the quadratic form $Q_C^{-1}$. Likewise, the conic circumscribed about $P^*$ is given by $Q_I^{-1}$. So,
$$
R_{P^*} = (Q_C^{-1})^{-1}Q_I^{-1} = Q_C Q_I^{-1} =(Q_I^{-1}Q_C)^* = R_P^*.
$$
\item Any point in the convex hull of $P$ is inside the circumscribed conic (i.e. in the contractible component of the complement to that conic in $\P^2$), or belongs to that conic. Therefore, the polar line of that point with respect to the circumscribed conic is either outside or tangent to that conic, and the point polar to that line with respect to the inscribed conic is inside the inscribed conic and hence inside $P$. So, $R_P(\mathrm{conv}(P)) \subset \mathrm{int}(\mathrm{conv}(P))$,
as claimed.
\item
By Corollary \ref{cor:Kasner}, we have $R_P = R_{D^k(P)}$, so
$
R_P( \mathrm{conv}(D^k(P))) = R_{D^k(P)}(\mathrm{conv}(D^k(P)),
$
which, by above, is a subset of $\mathrm{conv}(D^k(P))$.
Therefore, $$R_P\left(\bigcap\nolimits_{k \geq 0} \mathrm{conv}(D^k(P))\right) \subset \bigcap\nolimits_{k \geq 0} R_p(\mathrm{conv}(D^k(P))) \subset  \bigcap\nolimits_{k \geq 0} \mathrm{conv}(D^k(P)),
$$ meaning that the limit point of the pentagram orbit $D^k(P)$ is indeed fixed by $R_P$. As for the uniqueness of the fixed point, it follows from other properties in the same way as for $G_P$. 
 \qedhere
\end{enumerate}
\end{proof}
From this proposition it follows that for a convex pentagon $P$, all its successive images under the map $S$ are convex if and only if the associated projective mapping $R_P$ is affine. Indeed, the proof of the corresponding statement for  $G_P$ is based on Proposition \ref{prop:glick}, and since that proposition also holds for $R_P$, the result follows. Now, it remains to show that $R_P$ is affine  if and only if the conic inscribed in $P$ and the conic circumscribed about $P$ are concentric. To that end, recall that the center of the conic is, by definition, the point polar to the line at infinity with respect to that conic. Therefore, the image of the line at infinity under the map $R_P$ is the line polar with respect to the inscribed conic to the center of the circumscribed conic. Thus, the centers coincide precisely when $R_P$ preserves the line at infinity, i.e. is affine, as desired.

\begin{remark}
Implication $4 \Rightarrow 1$ ($P$ is a convex pentagon with concentric inscribed and circumscribed conics $\Rightarrow$ all pentagons $S^k(P)$ are convex) can also be established as follows. First assume that the inscribed and circumscribed conics of $P$ are \textit{confocal}. Then, as shown in \cite{levi2007poncelet}, there exists an affine transformation $D_P$ taking $P$ to its pentagram image $D(P)$. Therefore, we have $S^k(P) = D_P^{-k}(P)$, and $S^k(P)$ is convex for every $k$. Now assume that the inscribed and circumscribed conics of $P$ are concentric, but not necessarily confocal. Then there exists a (generally speaking, defined over $\C$) affine transformation $Q$ which makes those concentric conics confocal. Then, since $Q(P)$ has confocal  inscribed and circumscribed conics, all the pentagons $S^k(Q(P))$ are convex, and the same holds for $S^k(P) = Q^{-1}(S^k(Q(P)))$, as claimed.  

\end{remark}

\begin{remark}
Instead of proving the equivalence $1 \Leftrightarrow 4$, we could have proved $3 \Leftrightarrow 4$, i.e. $G_P$ is affine $\Leftrightarrow$ $R_P$ is affine, as follows. First, recall that by Clebsch's theorem for any pentagon $P$ there exists a projective transformation $D_P$ such that $D_P(P) = D(P)$, see e.g. \cite[Theorem 2.1]{schwartz1992pentagram}. Furthermore, any pentagon is projectively equivalent to its dual, see e.g. \cite[Proposition 5]{fuchs2009self}. So, since $I(P)$ is polar to $P$ and hence projective to $P^*$, there exists a projective transformation such that $I(P) = I_P(P)$.  It is then a direct corollary of Kasner's theorem that  transformations $D_P$ and $I_P$ commute, cf. \cite[Corollary 7]{pamfilos2016diagonal}. Therefore, $D_P$ commutes with $R_P = I_P^2$. Further, as observed in \cite{glick2020limit}, $D_P$ coincides, as a linear operator in $3$-space, with $G_P - 3\cdot\Id$, so $R_P$ commutes with $G_P$. Now, assume that $R_P$ is affine. Then the line at infinity $L$ is a fixed point of the dual operator $R_{P}^*$. Moreover, by Proposition~\ref{prop:glick2}, Item 3, it is the only fixed point of $R_{P}^*$ in the interior of $P^*$. Then, since $G_P^*$ commutes with $R_P^*$ and preserves the interior of $P^*$, it must preserve $L$, which means that $G_P$ is affine. Likewise, if $G_P$ is affine, then $R_P$ is affine too, as desired.
\end{remark}

%

%

\bibliographystyle{plain}
\bibliography{pent.bib}

\end{document}